\definecolor{pennblue}{RGB}{1,31,91} 
\definecolor{pennred}{RGB}{153,0,0}   
     \definecolor{MyDarkBlue}{rgb}{0,0.1,0.7}
\theoremstyle{plain}
\newtheorem{theorem}{Theorem}
\newtheorem{definition}[theorem]{Definition}
\newtheorem{remark}[theorem]{Remark}
\numberwithin{equation}{section}
\numberwithin{theorem}{section}
\newcommand{\eqdef }{\overset{\mbox{\tiny{def}}}{=}}
\newcommand{\rone}{\mathbb{R}}
\title[opPINN: Physics-Informed Neural Network with operator learning]{opPINN: Physics-Informed Neural Network with operator learning to approximate solutions to the Fokker-Planck-Landau equation}
\author[J. Y. Lee]{Jae Yong Lee$^1$}
\address{$^1$Center for Artificial Intelligence and Natural Sciences, Korea Institute for Advanced Study (KIAS), Seoul 02455, Republic of Korea. \href{mailto:jaeyong@kias.re.kr}{jaeyong@kias.re.kr} }
\author[J. Jang]{Juhi Jang$^2$}
\address{$^2$Department of Mathematics, University of Southern California, Los Angeles, CA 90089, USA. \href{mailto:juhijang@usc.edu}{juhijang@usc.edu}}
\author[H. J. Hwang]{Hyung Ju Hwang$^{3,\dagger}$}
\address{$^3$Department of Mathematics, Pohang University of Science and Technology (POSTECH), Pohang 37673, Republic of Korea. \href{mailto:hjhwang@postech.ac.kr}{hjhwang@postech.ac.kr}}
\address{$^\dagger$Corresponding author.}
   \def\MR#1{}
\begin{document}



\let\thefootnote\relax\footnotetext{
    \textit{Key words and phrases.} Fokker-Planck-Landau equation, Kinetic theory of gases, Physics-informed neural network (PINN), Operator learning, and Deep learning.}
\addtocounter{footnote}{-1}\let\thefootnote\svthefootnote

\begin{abstract}We propose a hybrid framework opPINN: physics-informed neural network (PINN) with operator learning for approximating the solution to the Fokker-Planck-Landau (FPL) equation. The opPINN framework is divided into two steps: Step 1 and Step 2. After the operator surrogate models are trained during Step 1, PINN can effectively approximate the solution to the FPL equation during Step 2 by using the pre-trained surrogate models. The operator surrogate models greatly reduce the computational cost and boost PINN by approximating the complex Landau collision integral in the FPL equation. The operator surrogate models can also be combined with the traditional numerical schemes. It provides a high efficiency in computational time when the number of velocity modes becomes larger. Using the opPINN framework, we provide the neural network solutions for the FPL equation under the various types of initial conditions, and interaction models in two and three dimensions. Furthermore, based on the theoretical properties of the FPL equation, we show that the approximated neural network solution converges to the a priori classical solution of the FPL equation as the pre-defined loss function is reduced.
\end{abstract}

\setcounter{tocdepth}{2}

\maketitle
\tableofcontents

\thispagestyle{empty}

\section{Introduction}
\subsection{Motivation and main results}
The Fokker-Planck-Landau (FPL) equation  
is one of fundamental kinetic models commonly used to describe collisions among charged particles with long-range interactions. Due to its physical importance, many numerical approaches have been developed. Nonlinearity and the high dimensionality of variables make the FPL equation difficult to simulate efficiently and accurately. One of the main challenges to simulate the FPL equation comes from a collision integral operator. The complex three-fold integro-differential operator in the velocity domain must be handled carefully as it is related to macroscopic physical quantities, such as the conservation of total mass, momentum, and kinetic energy.

Various numerical methods have been studied for the simulation of the FPL equation \cite{MR3202241}. Among others, a fast spectral method using the fast Fourier transform has been proposed to obtain a higher accuracy and efficiency \cite{MR1906573,MR1795398}. It reduces the complexity of evaluating the quadratic Landau collision operator $Q(f,f)$ from $O(N^2)$ to $O(N\log N)$, where $N$ is the number of modes in the discrete velocity space. However, it still takes a huge computation cost as $N$ increases.

Recently, deep learning based methods have been developed to solve PDEs with lots of advantages. Especially, a physics-informed neural network (PINN) \cite{MR3881695} learns the neural network parameters to minimize the residual of PDEs in the least square sense. PINN can approximate a mesh-free solution by learning with grid sampling from a domain of PDEs without any discretization of the domain. Furthermore, PINN can calculate the differentiation by using a powerful technique Automatic Differentiation that enables us to take the derivative of any order of functions easily.

However, PINN has some difficulties in applying directly to complex PDEs. When the PDEs contain complex integrals, PINN needs to sample grid points for integration as well as grid points in a given domain. It requires lots of computational costs to use PINN for approximating solutions to the equations. Furthermore, the cost can be larger when the dimension of the equation becomes large. It is hard to use PINN directly to deal with the kinetic equation with a complex collision operator.

To overcome the difficulties described above, we propose a hybrid framework opPINN, which efficiently combines PINN with an operator learning approach to approximate a mesh-free solution to the FPL equation. Operator learning is a deep learning approach to learn a mapping from the infinite-dimensional functions to infinite-dimensional functions. Many neural network structures containing convolutional neural networks are developed as a surrogate model to learn operators in various problems for PDEs \cite{li2020fourier,lu2019deeponet,MR3800689}. The proposed opPINN framework uses the operator learning to approximate the complex operator in the Landau collision integral of the FPL equation.

The opPINN framework is divided into two steps: Step 1 and Step 2. The operator surrogate models are pre-trained during Step 1. Using the pre-trained operator surrogate models, the solution to the FPL equation is approximated effectively via PINN during Step 2 with reduced computational costs. Furthermore, the surrogate model takes a great advantage of flexibility in that it can also be combined with already built existing traditional numerical schemes, such as finite different methods (FDMs) and temporal discretization schemes, to simulate the FPL equation. It can be very useful in computational time as the number of velocity modes $N$ becomes larger since only the inference time is required after the operator surrogate model is pre-trained.

There are many works to combine the operator learning with PINN \cite{li2021physics,wang2021learning,MR3957452}. They focus on learning a solution operator with the aid of the PDE constraints as the PINN perspective. On the contrary, the proposed opPINN focuses on approximating a solution to given integro-differential PDE using PINN with the pre-trained integration operator. Therefore, the proposed opPINN framework can take great advantages of PINN. The proposed framework generates mesh-free solutions that can be differentiated continuously with respect to each input variable. Furthermore, the approximated neural network solution preserves the positivity at all continuous time by using a softplus activation function in the last layer of our neural network structure. It is important but in general difficult to keep a positive solution at any time in numerical methods for the kinetic equations. 

Many studies \cite{holloway2021acceleration,miller2021encoder,xiao2021using} use deep learning to approximate the integral collision operator in the FPL equation or Boltzmann equation. Authors in 
\cite{miller2021encoder,xiao2021using} use operator learning directly to approximate the collision operator. However, they have some weaknesses in the experiment results, such as the time evolution of the approximated solution for various initial conditions, and the trend of the theoretical equilibrium of the approximated solution. Authors in \cite{holloway2021acceleration} approximate the solution to the Boltzmann equation using Bhatnagar-Gross-Krook (BGK) solver and the prior knowledge of the Maxwellian equilibrium instead of directly solving the Boltzmann equation. They also only focus on the reduced distribution solution which has only one or two dimensions in the velocity variable.

Furthermore, those studies also do not provide the theoretical guarantee that the solution of the FPL equation and the Boltzmann equation can be well approximated using a neural network structure. The theoretical evidence of the proposed equation solver is important in numerical analysis. In this work, we provide a theoretical support that the opPINN framework can approximate the solution to the FPL equation by using the theoretical backgrounds on the FPL equation. 

The main contributions of the study are as follows.

\begin{itemize}[topsep=5pt]
    \item We propose a hybrid framework opPINN using the operator learning approach with PINN (Figure \ref{fig:framework}). After the operator surrogate models are trained during Step 1, the solution to the FPL equation is effectively approximated using PINN during Step 2 with the pre-trained operator surrogate models. The operator surrogate models greatly reduce the computational cost and boost PINN by approximating the Landau collision integral in the FPL equation effectively. To the best of authors’ knowledge, this is the first attempt to use the operator learning to make PINN algorithm more effective for complex PDEs.

    \item The operator surrogate model takes a great advantage of flexibility in that it can also be combined with existing traditional numerical schemes. Combining the operator surrogate models with FDM and temporal discretization schemes, the solution to the FPL equation can be approximated (Figure \ref{fig:operator_euler}). It has a high efficiency in computational time for approximating the Landau collision term when the number of velocity modes $N$ becomes larger (Figure \ref{fig:operator_speed}). This is because the surrogate model only needs to infer for approximating the collision integral operator after the model is pre-trained.

    \item The opPINN framework generates the mesh-free neural network solutions for the FPL equation, which are continuous-in-time smooth and preserve the positivity at any time. Using the opPINN framework, we provide the neural network solutions for the homogeneous FPL equation under the various types of initial conditions, and interaction models in two and three dimensions. We show that the neural network solutions converge pointwisely to the equilibrium based on the kinetic theory. Furthermore, we provide the graphs of macroscopic physical quantities including the mass, the momentum, the kinetic energy, and the entropy.

    \item We also provide the theoretical support that the opPINN framework can approximate the solution to the FPL equation. Based on the theoretical properties of the FPL equation, the approximated neural network solution to the FPL equation using the proposed opPINN framework converges to the a priori classical solution to the FPL equation as the pre-defined total loss is reduced.
\end{itemize}

\subsection{The Fokker-Planck-Landau equation}The FPL equation is a kinetic model describing the particles in terms of a distribution function $f(t,\textbf{x},\textbf{v})$ of particles at time $t$, position $\textbf{x}$ with velocity $\textbf{v}$. The $d-$dimensional ($d=$2,3) FPL equation is
\begin{equation}\label{FPL_nonhomo}
    \partial_tf(t,\textbf{x},\textbf{v}) + \textbf{v}\cdot\nabla_xf(t,\textbf{x},\textbf{v})=Q(f,f)\eqdef\nabla_\textbf{v}\cdot\left(\int_{\rone^d}d\textbf{v}_*\Phi(\textbf{v}-\textbf{v}_*)\left(f_*\nabla f-f\nabla f_*\right)\right),
\end{equation}
where $\textbf{x},\textbf{v}\in\mathbb{R}^d$ and $f_*\eqdef f(t,\textbf{x},\textbf{v}_*)$ and $\Phi(\textbf{z})$ is the collision kernel that is given by the following $d\times d$ matrix 
\begin{equation}\label{kernel}
    \Phi(\textbf{z})=\Lambda|\textbf{z}|^\gamma(|\textbf{z}|^2I_d-\textbf{z}\otimes\textbf{z}).
\end{equation}
Here, $\gamma$ is the index of the power of the distance. $\gamma>0$ corresponds to the hard potential model and $\gamma<0$ corresponds to the soft potential model. The model of Maxwell molecules is the case $\gamma=0$ and the model with Coulomb interactions is the case $\gamma=-3$ when $d=3$.

In this work, we study the $d-$dimensional homogeneous FPL equation given as
\begin{equation}\label{FPL}
    \partial_tf(t,\textbf{v})=Q(f,f).
\end{equation}
The Landau collision term $Q(f,f)$ can be rewritten as the following form
\begin{equation}\label{landau}
    Q(f,f)=\nabla_\textbf{v}\cdot\left(D(f)\nabla f-F(f)f\right),
\end{equation}
where
\begin{equation}\label{Df}
    D(f)=\int_{\rone^d}d\textbf{v}_*\;\Phi(\textbf{v}-\textbf{v}_*)f(\textbf{v}_*)\in\mathbb{R}^{d\times d},
\end{equation}
and
\begin{equation}\label{Ff}
    F(f)=\int_{\rone^d}d\textbf{v}_*\;\Phi(\textbf{v}-\textbf{v}_*)\nabla f(\textbf{v}_*)\in\mathbb{R}^d.
\end{equation}

\subsection{Fundamental properties of physical quantities and the equilibrium state}\label{properties}The collision operator of the FPL equation has a similar algebraic structure to that of the Boltzmann operator. It leads to physical properties of the FPL collision operator. We denote the density $\rho$, mean velocity $u$, and temperature $T$ as
\begin{equation}
    \rho\eqdef\int_{\rone^d}d\textbf{v}f,\quad \textbf{u}\eqdef\frac{1}{\rho}\int_{\rone^d}d\textbf{v}\;\textbf{v}f,\quad T\eqdef\frac{1}{\rho d}\int_{\rone^d}d\textbf{v}\;|\textbf{u}-\textbf{v}|^2f.
\end{equation} It is well known \cite{MR1392006} that the FPL equation \eqref{FPL} has the conservation of mass, momentum and kinetic energy given by
\begin{equation}\label{physical_quantities}
    \text{Mass}(t)\eqdef\rho,\quad \text{Mom}(t)\eqdef\rho \textbf{u},\quad \text{KE}(t)\eqdef\rho|\textbf{u}|^2+\rho dT.
\end{equation}
Also, the entropy of the system
\begin{equation}\label{entropy}
    \text{Ent}(t)\eqdef\int_{\rone^d}d\textbf{v}\;f\log f
\end{equation} is a non-increasing function, i.e.
\begin{equation}\label{entropy_nonincrease}
    \frac{d}{dt}\text{Ent}(t)\leq0.
\end{equation}Moreover, the equilibrium state of the FPL equation is known as the Maxwellian
\begin{equation}\label{maxwellian}
    M_{\rho,u,T,d}(\textbf{v})\eqdef\frac{\rho}{(2\pi T)^{d/2}}\exp\left(-\frac{|\textbf{v}-\textbf{u}|^2}{2T}\right).
\end{equation}

\subsection{Related works}The FPL operator is originally obtained from the limit of the Boltzmann operator when all binary collisions are grazing \cite{MR1055522,MR1167768,MR1165528}. This limit was first derived by Landau \cite{landau1958kinetic}. There are many theoretical studies for the FPL equation. Among others, Desvillettes and Villani in \cite{MR1737547,MR1737548} showed the existence, uniqueness and smoothness of weak solutions to the spatially homogeneous FPL equation when $\gamma>0$. Chen in \cite{MR2745513} considered the regularity of weak solutions to the Landau equation. The exponential convergence to the equilibrium to the FPL equation was proved in \cite{MR3407515} for the hard potential case. For the soft potential case ($\gamma<0$), Fournier and Guerin \cite{MR2502525} proved the uniqueness of weak solution to the Landau equation using the probabilistic techniques. In the framework of classical solutions, Guo in \cite{MR1946444} established the global-in-time existence and stability in a periodic box when the initial data is close to Maxwellian. For more recent analytical studies on the FPL equation, we refer to the \cite{MR3375485,MR3778645,MR3365830,MR3369941,MR2718931,MR3599518,MR4076068,MR3158719} and references therein.

A lot of numerical methods have been developed for the FPL equation to evaluate the quadratic collision operator $Q(f,f)$ that has the complexity $O(N^2)$, where $N$ is the number of modes in the discrete velocity space. In the past decades, many studies proposed the probabilistic method known as the particle method \cite{MR3645392} that uses the linear combination of Dirac delta functions for the particle locations. Carrillo et al. in \cite{MR4121055,carrillo2021random} propose a novel particle method that preserve important properties of the Landau operator, such as conservation of mass, momentum, energy and the decay of entropy. There are also many deterministic methods to approximate the complex Landau collision operator \cite{MR1640174,MR1677589,MR3635847}. Lately, a numerical method based on the FDM is developed for the spatially homogeneous FPL equation in \cite{MR3654740}. The proposed schemes have been designed to satisfy the conservation of physical quantities, positivity of the solution, and the convergence to the Maxwellian equilibrium. However, many works focus on the isotropic case or the simplified problems due to the computational complexity of the three-fold collision integral. On the other hand, different schemes based on the spectral method have been studied for the FPL equation. Pareschi et al in \cite{MR1795398} first proposed the Fourier spectral method to simulate the homogeneous FPL equation using the fast Fourier transform so that the computational complexity $O(N^2)$ is reduced to $O(N\log N)$. Later, Filbet and Pareschi in \cite{MR1906573} extended the spectral method to the inhomogeneous FPL equation with one dimension in space and two dimension in velocity. The spectral method combined with a Hermite expansion has also been proposed \cite{MR4224112,MR4108209}. Filbet in \cite{filbet2020spectral} used the spectral collocation method to reduce the number of discrete convolutions. There are also other fast algorithms that reduce costs to $O(N\log N)$, such as a multigrid algorithm \cite{MR1447091} and a multipole expansion\cite{MR1606249}. There are also an asymptotic preserving scheme for the nonhomogeneous FPL equations proposed in \cite{MR2818606}.

As a computing power increases, deep learning methods have emerged for scientific computations, including PDE solvers. There are two mainstreams to solve PDEs with the aid of a deep learning, namely using a neural network directly to parametrize a solution to the PDE and using a neural network as a surrogate model to learn a solution operator. The first approach was proposed by many works \cite{lagaris1998artificial,lagaris2000neural,MR1078748,MR2282808,MR3211506}. In recent times, PINN is developed by Raissi et al. \cite{MR3881695}. They parametrize a PDE solution by minimizing a loss function that is a least-square error of the residual of PDE. The authors in \cite{MR3847747,MR3874585} show the capabilities of PINN for solving high-dimensional PDEs. Authors in \cite{MR4116803,MR4313375} study Vlasov-Poisson-Fokker-Planck equation and its diffusion limit using PINN. They also give the theoretical proof that the neural network solutions converge to the a priori classical solution to the kinetic equation as the proposed loss function is reduced. The review paper \cite{karniadakis2021physics} and its references contain the recent trends and diverse applications of the PINN. 

For the second approach, called an operator learning, a neural network is utilized as a mapping from the parameters of a given PDE to its solution or the target function.  Li et al. in \cite{li2020fourier,li2020neural} propose an iterative scheme to learn a solution operator of PDEs, called a neural operator. Lu et al. in \cite{lu2019deeponet} propose a DeepONet based on the universal approximation theorem of function operators. Authors in \cite{li2021physics,wang2021learning,MR3957452} extend the previous operator learning models by using a PDE information as an additional loss function to learn more accurate solution operators. Authors in \cite{holloway2021acceleration} approximate the solution to the Boltzmann equation using fully connected neural network with the aid of a BGK solver. Authors in \cite{miller2021encoder,xiao2021using} use the operator learning approach to simulate the FPL equation and the Boltzmann equation.

\subsection{Outline of the paper}In Section \ref{sec:method}, we will introduce the overview of the opPINN framework to approximate the FPL equation. The descriptions of the opPINN framework are divided into two steps: Step 1 and Step 2. It includes a detailed explanation of the train data and the loss functions for each step. In Section \ref{sec:convergence}, we provide the theoretical proof on the convergence of the neural network solutions to the classical solutions of the FPL equation. In Section \ref{sec:simulation}, we show our simulation results on the FPL equation under various conditions. Finally, in Section \ref{sec:conclusion}, we summarize the proposed method and the results of this paper.

\section{Methodology: Physics-Informed Neural Network with operator learning (opPINN)}\label{sec:method} The proposed opPINN framework is divided into two steps: Step 1 and Step 2 as visualized in Figure \ref{fig:framework}. During Step 1, the operators $D(\cdot):f\mapsto D(f)$ and $F(\cdot):f\mapsto F(f)$ are approximated. During Step 2, the solution $f$ of the FPL equation \eqref{FPL} is approximated using PINN. Here, the pre-trained operator surrogated models $D^{nn}(f;\theta_D)$ and $F^{nn}(f;\theta_F)$ from Step 1 are used to approximate the collision term $Q(f,f)$ in the FPL equation. We use the PyTorch library for opPINN framework. Each step is explained in the following subsections.

\begin{figure}[H]
  \includegraphics[width=\textwidth, draft=false]{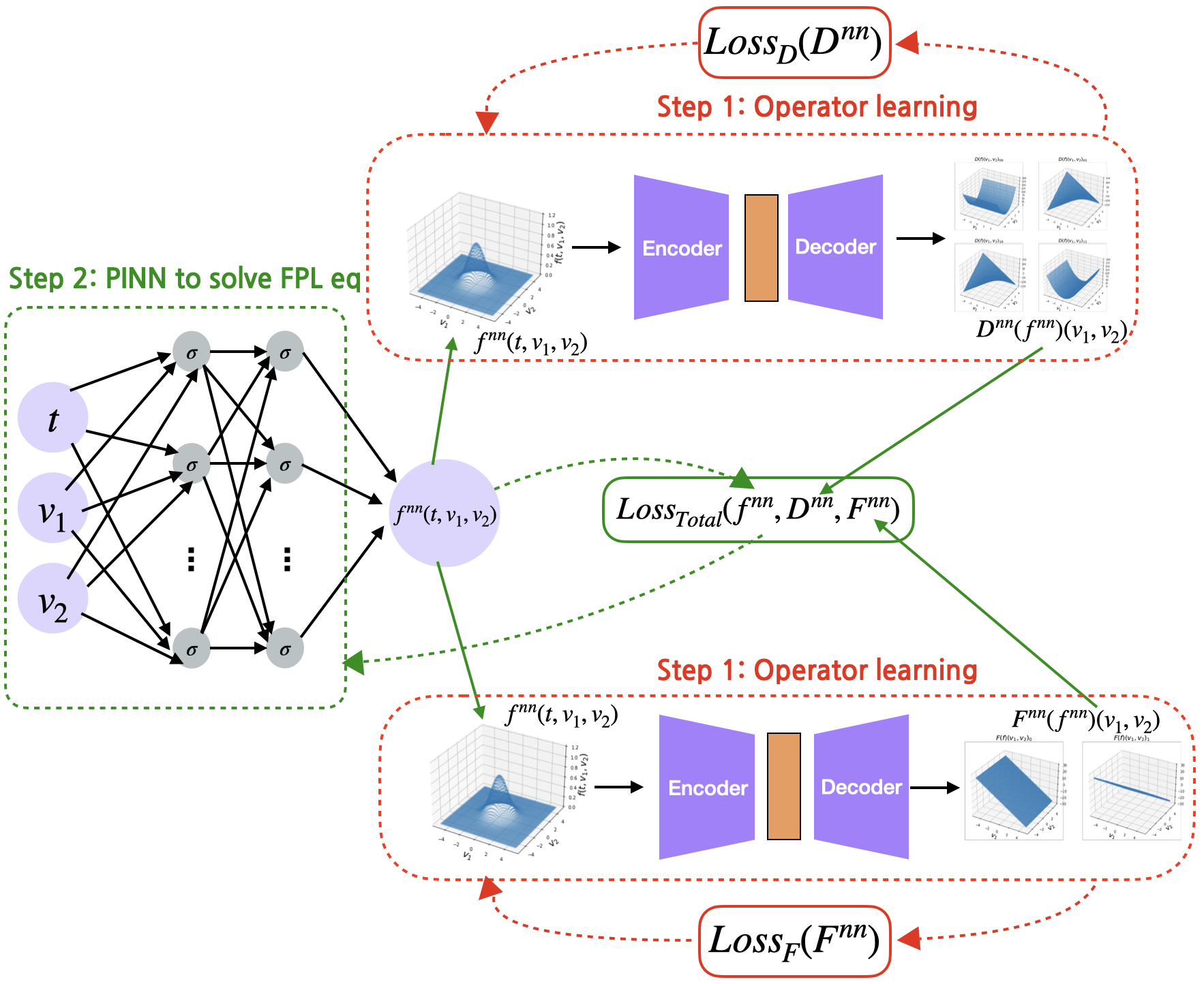}
  \caption{Overview of the proposed opPINN framework to approximate the solution to the FPL equation.}
  \label{fig:framework}
\end{figure}

\subsection{Step 1: Operator learning to approximate the collision operator}\label{subsec:step1}Many structures are used to approximate an operator using neural networks in recent studies. In particular, convolutional encoder-decoder structure is one of well-known neural network structures, which is broadly used in image-to-image regression tasks. In this study, we use two convolution encoder-decoder networks to learn operators $D(f)$ and $F(f)$. Each convolution network takes the distribution $f(t,\textbf{v})$ as input to obtain the output $D(f)$ and $F(f)$. Other structures, such as DeepONet \cite{lu2019deeponet} or Neural Operator \cite{li2020neural}, could also be available for operator learning in the future. In two-dimensional FPL equation, the solution $f(t,v_1,v_2)$ is considered as a two-dimensional image with a fixed time $t$ so that Conv 2d layers are used in the proposed model. Otherwise, Conv 3d layers are used for three-dimensional FPL equation. In the experiments, we use the $N^d$ size of image-like data $f(t,\textbf{v})$ for different $N$ to compare the spectral method with the $N$ modes of each velocity space (total $N^d$ modes). The encoder consists of 4 convolutional layers and a linear layer. The decoder consists of 4 or 5 convolution layers and 3 upsampling layers. The number of the convolution layers in the decoder depends on the dimension of each network output $D\in\rone^{d\times d}$ and $F\in\rone^{d}$. We denote the each network parameter as $\theta_D$ and $\theta_F$ and the output of each convolution encoder-decoder as $D^{nn}(f;\theta_D)$ and $F^{nn}(f;\theta_F)$. The Adam optimizer with learning late $10^{-3}$ is used with the learning rate scheduling.

\subsubsection{Train data for Step 1}To learn the operator $D(\cdot):f\mapsto D(f)$ and $F(\cdot):f\mapsto F(f)$, the samples of distribution $f$ are necessary as a training data, denoted as $\mathcal{D}$. In order to train the operators for distributions ranging from the initial condition to the equilibrium state, the training data is constructed with Gaussian functions and its variants. More precisely, the data $\mathcal{D}$ consists of the three type of distribution functions as follows:
\begin{itemize}
     \item (Gaussian function) $\{M_i(\textbf{v})\}_{i=1}^{100}$ where
     \begin{equation}\notag
         M_i(\textbf{v})\eqdef\frac{1}{(2\pi\sigma_i^2)^{d/2}}\exp\left(-\frac{|\textbf{v}-\textbf{c}_i|^2}{2\sigma_i^2}\right)
     \end{equation}with $\textbf{c}_i\in[-1,1]^d$ and $\sigma_i\in[0.8,1.2]$ 
     \item (Sum of two Gaussian functions) $\{M^2_i(\textbf{v})\}_{i=1}^{100}$ where
     \begin{equation}\notag
         M^2_i(\textbf{v})\eqdef M_{i_1}(\textbf{v})+M_{i_2}(\textbf{v}),
     \end{equation}where $M_{i_1}$ and $M_{i_2}$ are also random Gaussian functions.
     \item (Gaussian function with a small perturbation) $\{M^p_i(v)\}_{i=1}^{100}$ where
     \begin{equation}\notag
         M^p_i(\textbf{v})\eqdef M_i(\textbf{v})+M_i(v)g(\textbf{v}),
     \end{equation}where $g(\textbf{v})$ is a polynomial of degree 2 with randomly chosen coefficients in $[0,1]$. 
 \end{itemize}For the convenience, training data $\mathcal{D}$ is constructed by normalizing these 300 random distribution samples so that their volume becomes $0.2$. 
\subsubsection{Loss functions for Step 1}The supervised loss functions are used to train the operator surrogate models $D^{nn}(f;\theta_D)$ and $F^{nn}(f;\theta_F)$. More precisely, the loss functions are defined as follows:
\begin{equation}\label{loss_D}
    Loss_{D}(D^{nn})=\frac{1}{|\mathcal{D}|}\sum_{f\in\mathcal{D}}L\left(D^{nn}(f;\theta_D),D(f)\right)
\end{equation}
and
\begin{equation}\label{loss_F}
    Loss_{F}(F^{nn})=\frac{1}{|\mathcal{D}|}\sum_{f\in\mathcal{D}}L\left(F^{nn}(f;\theta_F),F(f)\right),
\end{equation}
where $L$ is a relative error with respect to Frobenius norm and $\mathcal{D}$ is a set of training data defined in the previous section. The target values $D(f)$ and $F(f)$ for each distribution $f\in\mathcal{D}$ are obtained using Gauss-Legendre quadrature, which is a numerical method for approximating the integral of function.

\subsection{Step 2: PINN to approximate the solution to the FPL equation}Using the pre-trained surrogate models $D^{nn}(f;\theta_D)$ and $F^{nn}(f;\theta_F)$, PINN is used to approximate the solution to the FPL equation during Step 2. A fully connected deep neural network (DNN) is used to parametrize the solution to the FPL equation. We denote the deep $L-$layer neural network (an input layer, $L-1$ hidden layers, and an output layer which is a $L-$th layer) parameters as follows:
\begin{itemize}
    \item $w^{(l+1)}$: the weight matrix between the $l-$th layer and the $(l+1)-$th layer of the network
    \item $b^{(l+1)}$: the bias vector between the $l-$th layer and the $(l+1)-$th layer of the network
    \item $m_l$: the width in the $l-$th layer of the network
    \item $\bar{\sigma}_l$: the activation function in the $l-$th layer of the network
\end{itemize}
where $m=(m_0,m_1,...,m_{L-1})$, $w=\{w^{(l)}\}_{l=1}^{L}$ and $b=\{b^{(l)}\}_{l=1}^{L}$. The details on the DNN structure and deep learning algorithm for PINN are precisely described in \cite{MR3881695,MR4313375}. In this study, the DNN takes a grid point $(t,\textbf{v})$ as input to obtain the output denoted by $f^{nn}(t,\textbf{v};m,w,b)$ that approximates the solution to the FPL equation. The DNN has 4 hidden layers which consist of 3(or 4)-100-100-100-100-1 neurons ($L=5$). The hyper-tangent activation function ($\bar{\sigma}_l=\frac{e^x-e^{-x}}{e^x+e^{-x}}$ for $l=1,...,L-1$) is used for hidden layers and the Softplus activation function ($\bar{\sigma}_L=\ln(1+e^x)$) is used only for the last layer. The Softplus function is used to preserve the positivity of the output $f^{nn}$, which is one of the main difficulties in numerical methods. Here, the Adam optimizer with learning late $10^{-3}$ is also used with the learning rate scheduling. Since the output $f^{nn}(t,\textbf{v};m,w,b)$ is continuous in time $t$ and continuous in velocity $\textbf{v}$, we obtain mesh-free and continuous-in-time solutions.

\subsubsection{Train data for Step 2} The data of grid points for each variable domain are necessary to train the neural network solution $f^{nn}(t,\textbf{v};m,w,b)$. The time interval $[0,T]$ is chosen as $[0,3]$ or $[0,5]$ depending on the initial conditions in the experiment. The interval is enough to reach the steady-state of FPL equation. Also, we truncate the velocity domain as
\begin{equation}\label{truncate}
    V\eqdef B_{R}(0)\subset\rone^d
\end{equation} with $R=5$ and assume that $f^{nn}$ is 0 on the outside of the domain $V$. The time grid is randomly chosen and the velocity grid is uniformly chosen in order to use the networks $D^{nn}(f;\theta_D)$ and $F^{nn}(f;\theta_F)$ consisting of convolutional layers with the uniform grid. The grid points for the governing equation in each epoch are chosen as
\begin{equation}\notag
    \{(t_i,\textbf{v}_j)\}_{i,j}\in[0,T]\times V
\end{equation}
with randomly chosen $t_i$ for $1\leq i\leq N_t$ and uniformly chosen $\textbf{v}_j$ for $1\leq j\leq N_v^d$. We set $N_t=10$, $N_v=64$ if $d=2$ and $N_v=32$ if $d=3$. For the initial condition, we use the grid points $\{(t=0,\textbf{v}_j)\}_j$.

\subsubsection{Loss functions for Step 2}
Using the pre-trained operator surrogate models $D^{nn}$ and $F^{nn}$, we define the following loss functions for Step 2 in our opPINN framework. For the governing equation \eqref{FPL}, we define the loss function as
\begin{multline}\label{loss_ge}
  Loss_{GE}(f^{nn}, D^{nn}, F^{nn}) \eqdef \int_{(0,T)}dt\int_{V}d\textbf{v} |\partial_t f^{nn}(t,\textbf{v};m,w,b)\\
  - \nabla_\textbf{v}\cdot(D^{nn}(f^{nn}(t,\textbf{v};m,w,b);\theta_D)\nabla f^{nn}(t,\textbf{v};m,w,b)\\
  -F^{nn}(f^{nn}(t,\textbf{v};m,w,b);\theta_F)f^{nn}(t,\textbf{v};m,w,b))|^2.
\end{multline}

We define the loss function for the initial condition as
\begin{equation}
Loss_{IC}(f^{nn}) \eqdef \int_{V} d\textbf{v}\left|f^{nn}(0,\textbf{v};m,w,b)-f_0(\textbf{v})\right|^2.
\end{equation}

Finally, we define the total loss as
\begin{equation}\label{loss_total}
  Loss_{Total}(f^{nn}, D^{nn}, F^{nn}) \eqdef Loss_{GE} + Loss_{IC}.
\end{equation}
Note that the integration in the loss functions are approximated by the Riemann sum on the grid points.

\section{On convergence of neural network solution to the FPL equation}\label{sec:convergence} In this section, we provide the theoretical proof on the convergence of the approximated neural network solution to the solution to the FPL equation \eqref{FPL}. In this section, we assume that the existence and the uniqueness of solutions to the FPL equation \eqref{FPL} are a priori given.

We first introduce the definition and the theorem from \cite{li1996simultaneous}.
\begin{definition}[Li, \cite{li1996simultaneous}]\label{C_hat} For a compact set $K$ of $\mathbb{R}^n$, we say $f\in \widehat{C}^{\xi}(K)$, $\xi\in \mathbb{Z}_+^n$ if there is an open $\Omega$ (depending on $f$) such that $K\subset \Omega$ and $f\in C^{\xi}(\Omega).$
\end{definition} 

\begin{theorem}[Li, Theorem 2.1, \cite{li1996simultaneous}]\label{Li_global} Let $K$ be a compact subset of $\mathbb{R}^n$, $n\ge 1$, and $f\in\widehat{C}^{\xi_1}(K)\cap\widehat{C}^{\xi_2}(K)\cap \cdots \widehat{C}^{\xi_q}(K)$, where $\xi_i \in \mathbb{Z}^n_+$ for $1\le i\le q$. Also, let $\bar{\sigma}$ be any non-polynomial function in $C^l(\mathbb{R})$, where $l=\max\{|\xi_i|:1\le i\le q\}$. Then for any $\epsilon>0,$ there is a network
$$f^{nn}(x)=\sum_{j=0}^\nu c_i\bar{\sigma}(\langle w_j,x\rangle +b_j), \ x\in \mathbb{R}^n,$$ where $c_i\in \mathbb{R},$ $w_j\in \mathbb{R}^n$, and $b_j\in \mathbb{R}$, $0\le j\le \nu$ such that 
$$\|D^kf-D^kf^{nn}\|_{L^{\infty}(K)}<\epsilon,$$
for $k\in \mathbb{Z}^n_+$, $k\le \xi_i$, for some $i$, $1\le i\le q.$
\end{theorem}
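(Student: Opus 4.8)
The statement is a simultaneous-approximation (i.e.\ a $C^{l}$-type) version of the universal approximation theorem, so the plan is to reduce it, in three stages, to a one-dimensional density fact about dilations and translations of $\bar{\sigma}$. First I would invoke Definition \ref{C_hat}: since $f$ lies in $\widehat{C}^{\xi_{i}}(K)$ for each $i$, it extends to a genuine $C^{l}$ function on some open neighborhood $\Omega\supset K$ with $l=\max\{|\xi_{i}|\}$. Fixing a compact set $K\subset K'\subset\Omega$, the goal reduces to approximating $f$ together with all partial derivatives $D^{k}f$ for $k\le\xi_{i}$ (some $i$), uniformly on $K$ within $\epsilon$, by a network of the stated ridge form.

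The second stage is polynomial approximation in the $C^{l}$ norm. By a standard mollify-and-truncate argument (or multivariate Bernstein/Jackson-type estimates) there is a polynomial $P$ with $\|D^{k}f-D^{k}P\|_{L^{\infty}(K)}<\epsilon/2$ simultaneously for all $|k|\le l$, which in particular covers every required index $k\le\xi_{i}$. I would then use the classical algebraic fact that the space of homogeneous polynomials of degree $r$ in $n$ variables is spanned by $r$-th powers of linear forms, so that $P$ can be written as a finite sum $P(x)=\sum_{m}a_{m}\langle w_{m},x\rangle^{r_{m}}$ with $r_{m}\le l$. Hence it suffices to approximate, in $C^{l}(K)$, each single ridge monomial $x\mapsto\langle w_{m},x\rangle^{r_{m}}$ by a network $\sum_{j}c_{j}\bar{\sigma}(\lambda_{j}\langle w_{m},x\rangle+b_{j})$.

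The third and central stage is the one-dimensional lemma: for $\bar{\sigma}\in C^{l}(\mathbb{R})$ non-polynomial, the span of $\{t\mapsto\bar{\sigma}(\lambda t+\theta):\lambda,\theta\in\mathbb{R}\}$ is dense in $C^{l}([a,b])$ on any interval. The mechanism is to differentiate in $\lambda$, using $\partial_{\lambda}^{j}\bar{\sigma}(\lambda t+\theta)=t^{j}\bar{\sigma}^{(j)}(\lambda t+\theta)$, and approximate these $\lambda$-derivatives at $\lambda=0$ by difference quotients, which themselves lie in the span; this yields $t^{j}\bar{\sigma}^{(j)}(\theta)$ in the $C^{l}$-closure. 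Non-polynomiality forces each $\bar{\sigma}^{(j)}$ with $j\le l$ to be somewhere nonzero (otherwise $\bar{\sigma}$ would be a polynomial of degree $<j$), so choosing $\theta_{j}$ with $\bar{\sigma}^{(j)}(\theta_{j})\ne0$ puts every monomial $t^{j}$, $j\le l$, in the closure, and $C^{l}$-density of polynomials on $[a,b]$ finishes the lemma. Lifting to the multivariate setting is then purely the chain rule: $D^{\beta}[\langle w,x\rangle^{r}]=c_{\beta}\,w^{\beta}\langle w,x\rangle^{r-|\beta|}$ for $|\beta|\le r$ (and $0$ otherwise), so a univariate $C^{l}$ approximation of $s\mapsto s^{r}$ on the image interval $I_{w}=\{\langle w,x\rangle:x\in K'\}$ controls every multivariate partial $D^{\beta}$ of the ridge term on $K$ through the fixed factors $w^{\beta}$. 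Summing over $m$, composing with the linear forms $\langle w_{m},\cdot\rangle$, and applying the triangle inequality against the polynomial bound produces the desired network with total error $<\epsilon$.

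\textbf{The main obstacle.} The delicate point is the one-dimensional $C^{l}$-density lemma: upgrading the difference-quotient generation of monomials from pointwise to convergence in the full $C^{l}$ norm (all derivatives up to order $l$ at once). This is precisely where $\bar{\sigma}\in C^{l}$ and a uniform-continuity estimate for $\bar{\sigma}^{(l)}$ over compact ranges of the parameters $(\lambda,\theta)$ are essential, and it is the part I would borrow directly from \cite{li1996simultaneous} rather than re-derive, since the remaining two stages are standard once it is granted.
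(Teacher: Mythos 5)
First, a point of orientation: the paper does not prove this statement at all. It is Theorem 2.1 of Li \cite{li1996simultaneous}, imported verbatim as a tool (it is invoked later only to justify Theorem \ref{thm1}), so the comparison here is against Li's original argument rather than anything in the paper. Your three-stage architecture --- extension off $K$ via Definition \ref{C_hat}, simultaneous polynomial approximation in the $C^{l}$ norm, decomposition of homogeneous polynomials into powers of linear forms, and a one-dimensional density lemma for dilates and translates of $\bar{\sigma}$ --- is indeed the standard architecture of that proof.

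There is, however, a genuine gap in your stage three, and it is not the point you flag as the main obstacle. Since $\bar{\sigma}$ is assumed only to lie in $C^{l}(\mathbb{R})$, the identity $\partial_{\lambda}^{j}\bar{\sigma}(\lambda t+\theta)=t^{j}\bar{\sigma}^{(j)}(\lambda t+\theta)$ is available only for $j\le l$, so your difference-quotient mechanism places in the closure of the span only the monomials $t^{j}$ with $j\le l$. That is not enough: the span of $\{1,t,\dots,t^{l}\}$ is finite-dimensional, hence closed, and polynomials of degree at most $l$ are certainly not dense in $C^{l}([a,b])$ (already $t^{l+1}$ cannot be approximated even uniformly). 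Correspondingly, your stage-two claim that $P$ decomposes into ridge powers $\langle w_{m},x\rangle^{r_{m}}$ with $r_{m}\le l$ is false: the degree of the approximating polynomial $P$ must grow without bound as $\epsilon\to 0$, so the one-dimensional lemma must handle $t^{r}$ for arbitrarily large $r$, which your mechanism cannot produce. The standard repair --- used by Li, and in Pinkus's treatment of the same theorem --- is a smoothing step your sketch omits entirely: form the convolutions $\bar{\sigma}*\varphi$ with $\varphi\in C^{\infty}_{c}(\mathbb{R})$; Riemann sums show each such convolution, together with its dilates and translates, lies in the $C^{l}$-closure of $\mathrm{span}\{\bar{\sigma}(\lambda t+\theta)\}$ on compact intervals; if every $\bar{\sigma}*\varphi$ were a polynomial, then (by a Baire-category bound on the degrees) $\bar{\sigma}$ itself would be one; hence some $\bar{\sigma}*\varphi$ is simultaneously $C^{\infty}$ and non-polynomial, and the $\lambda$-differentiation trick applied to \emph{it} yields $t^{j}$ for every $j\ge 0$. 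Without this step (or an equivalent), your lemma --- and with it the whole reduction --- does not close; the uniform-in-$(\lambda,\theta)$ convergence of difference quotients in the $C^{l}$ norm that you single out as the delicate point is, by comparison, a routine estimate.
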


\begin{remark} $\nu$, $n$, and $q$ in Theorem \ref{Li_global} correspond to $m_1$, $d+1$, and $1$ in our deep $2-$layer neural network setting respectively. Also, $\xi_1$ corresponds to $(1,2,2)$ if $d=2$ and $(1,2,2,2)$ if $d=3$.
\end{remark}
\begin{remark} Although the neural network in Theorem \ref{Li_global} has only one hidden layer, we can generalize the result to the one with several hidden layers (see, \cite{hornik1989multilayer}). Therefore, from now, we may assume that the neural network architecture has only one hidden layer; i.e., $L=2$.
\end{remark}

First, we assume that the operators $D(f)$ and $F(f)$ are well approximated using the convolutional encoder-decoders $D^{nn}(f;\theta_D)$ and $F^{nn}(f;\theta_F)$. The convolutional encoder-decoder has been successfully applied for many applications. Many studies such as \cite{guo2016convolutional}, \cite{MR3800689} and \cite{MR3957452} proposed models based on the convolutional encoder-decoder, which show the possibility of learning function-to-function mapping. In this regard, our assumption on the operator surrogate models $D^{nn}$ and $F^{nn}$ are as follows.

\textbf{Assumption.} For any $\varepsilon_D,\varepsilon_F>0$, there exists $\theta_D$ and $\theta_F$ such that the operator surrogate model $D^{nn}(\cdot;\theta_D)$ satisfies
\begin{equation}\label{assum_D}
\|D^{nn}(\cdot;\theta_D) - D(\cdot)\|_{op} < \varepsilon_D
\end{equation}
and the operator surrogate model $F^{nn}(\cdot;\theta_F)$ satisfies
\begin{equation}\label{assum_F}
\|F^{nn}(\cdot;\theta_F) - F(\cdot)\|_{op} < \varepsilon_F\;\text{and} \|\nabla_{\textbf{v}}\cdot(F^{nn}(\cdot;\theta_F) - F(\cdot))\|_{op} < \varepsilon_F.
\end{equation}

\begin{remark}
    We update the network parameters $\theta_D$ and $\theta_F$ using the supervised loss functions \eqref{loss_D} and \eqref{loss_F}. We expect that we can find $\theta_D$ and $\theta_F$ so that the neural network models $D^{nn}$ and $F^{nn}$ satisfy \eqref{assum_D} and \eqref{assum_F} for small positive constants $\varepsilon_D$ and $\varepsilon_F$.
\end{remark}

Now, we provide two main theorems. First, we show that there exists a neural network parameter that can reduce the loss function as much as desired.

\begin{theorem}[Reducing the loss funtion]\label{thm1}Assume that the solution $f$ to \eqref{FPL} is in $\widehat{C}^{(1,2,2)}([0,T]\times V)$ if $d=2$ or $\widehat{C}^{(1,2,2,2)}([0,T]\times V)$ if $d=3$. Also, assume that the non-polynomial activation function $\bar{\sigma}(x)$ is in $C^5(\rone)$ if $d=2$ or $C^7(\rone)$ if $d=3$. Then, for any $\varepsilon>0$, there exists $m$, $w$, $b$, $\theta_D$ and $\theta_F$ such that the neural network solution $f^{nn}(t,\textbf{v};m,w,b)$, $D^{nn}(f;\theta_D)$ and $F^{nn}(f;\theta_F)$ satisfy
\begin{equation}
\|Loss_{Total}(f^{nn}, D^{nn}, F^{nn})\|_{L^\infty} < \varepsilon.
\end{equation}
\end{theorem}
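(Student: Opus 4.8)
The plan is to exploit the fact that the exact solution makes every term of the loss vanish, and then to perturb. Since $f$ solves \eqref{FPL}, the identity $\partial_t f = \nabla_\mathbf{v}\cdot(D(f)\nabla f - F(f)f)$ holds pointwise on $[0,T]\times V$ and $f(0,\cdot)=f_0$; hence the residual built from the exact triple $(f,D,F)$ is identically zero. The goal is therefore to replace $(f,D,F)$ by $(f^{nn},D^{nn},F^{nn})$ while controlling the resulting error. Because both loss integrals are taken over the compact set $[0,T]\times V$, it suffices to bound the integrands uniformly: if the pointwise governing-equation residual is $<\eta$ and $|f^{nn}(0,\cdot)-f_0|<\eta$, then $Loss_{Total}\le |[0,T]\times V|\,\eta^2 + |V|\,\eta^2$, so it remains only to make $\eta$ small.

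First I would construct $f^{nn}$. Applying Theorem \ref{Li_global} to $f$ on $K=[0,T]\times V$ with the multi-index $\xi_1=(1,2,2)$ (or $(1,2,2,2)$ when $d=3$) — legitimate precisely because $f\in\widehat C^{(1,2,2)}$ and $\bar\sigma\in C^5$ (resp. $C^7$) as assumed — yields, for any $\delta>0$, a network $f^{nn}$ with
\[
\|f-f^{nn}\|_{L^\infty(K)},\ \|\partial_t f-\partial_t f^{nn}\|_{L^\infty(K)},\ \|\nabla_\mathbf{v} f-\nabla_\mathbf{v} f^{nn}\|_{L^\infty(K)},\ \|\nabla_\mathbf{v}^2 f-\nabla_\mathbf{v}^2 f^{nn}\|_{L^\infty(K)} < \delta.
\]
Evaluated at $t=0$, and using $f(0,\cdot)=f_0$, this already forces $Loss_{IC}<|V|\delta^2$. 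Next, for prescribed $\varepsilon_D,\varepsilon_F$ the Assumption supplies $\theta_D,\theta_F$ with $D^{nn},F^{nn}$ close to $D,F$ (and $\nabla_\mathbf{v}\cdot F^{nn}$ close to $\nabla_\mathbf{v}\cdot F$) in operator norm.

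The core of the argument is bounding $R^{nn}=\partial_t f^{nn}-\nabla_\mathbf{v}\cdot(D^{nn}(f^{nn})\nabla f^{nn}-F^{nn}(f^{nn})f^{nn})$. Subtracting the vanishing exact residual and expanding the flux difference, I would sort the terms into three families: (i) the time gap $\partial_t f^{nn}-\partial_t f$, directly $<\delta$; (ii) terms in which a derivative of $f^{nn}$ is traded for the same derivative of $f$, such as $D(f)\!:\!(\nabla_\mathbf{v}^2 f^{nn}-\nabla_\mathbf{v}^2 f)$ or $(\nabla_\mathbf{v}\cdot F(f))(f^{nn}-f)$ — here the exact coefficients $D(f),F(f),\nabla_\mathbf{v} D(f),\nabla_\mathbf{v}\cdot F(f)$ are bounded on $K$ (the $\Phi$-integrals against the smooth $f$ are continuous), so these are $O(\delta)$; and (iii) terms in which the exact operator is swapped for its surrogate applied to $f^{nn}$, e.g. $(D^{nn}(f^{nn})-D(f))\!:\!\nabla_\mathbf{v}^2 f^{nn}$ and $(\nabla_\mathbf{v}\cdot(F^{nn}(f^{nn})-F(f)))f^{nn}$. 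For family (iii) I would split once more, writing $D^{nn}(f^{nn})-D(f)=(D^{nn}-D)(f^{nn})+D(f^{nn}-f)$ and similarly for $F$: the first piece is governed by $\varepsilon_D$ (and, where the velocity-divergence lands on the surrogate, by the divergence clause $\|\nabla_\mathbf{v}\cdot(F^{nn}-F)\|_{op}<\varepsilon_F$), while the second uses the linearity of $D,F$ in their argument together with the smoothing and boundedness of $\Phi$ to convert $\|f^{nn}-f\|<\delta$ into smallness of $D(f^{nn}-f)$, $F(f^{nn}-f)$ and their needed $\mathbf{v}$-derivatives. The multiplying factors $\nabla_\mathbf{v}^2 f^{nn},\nabla_\mathbf{v} f^{nn},f^{nn}$ are bounded because they are $\delta$-close to bounded quantities.

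The main obstacle is family (iii), and within it the terms where the velocity-divergence differentiates the operator outputs: taming these is exactly the role of the extra divergence estimate on $F^{nn}-F$ in the Assumption, and of the linearity of $D,F$ plus the regularity of $\Phi$, which together guarantee that feeding the perturbed input $f^{nn}$ (rather than $f$) through the operators costs only $O(\delta)$ instead of an uncontrolled amount. Once every term is bounded by a constant — depending only on the fixed $K$-bounds of $f$ and $\Phi$ — times $\max\{\delta,\varepsilon_D,\varepsilon_F\}$, choosing $\delta,\varepsilon_D,\varepsilon_F$ small yields $\|R^{nn}\|_{L^\infty(K)}<\eta$, hence $Loss_{GE}<|[0,T]\times V|\,\eta^2$; combined with the $Loss_{IC}$ bound this gives $Loss_{Total}<\varepsilon$.
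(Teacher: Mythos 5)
Your proposal is correct and follows essentially the same route as the paper: the paper's proof is a one-line reference to Theorem 3.4 of \cite{MR4116803} combined with Theorem \ref{Li_global}, and that cited argument is precisely what you have reconstructed — simultaneous $L^\infty$ approximation of $f$ and its derivatives up to the multi-index $(1,2,2)$ (resp.\ $(1,2,2,2)$) via Li's theorem, the operator Assumption \eqref{assum_D}--\eqref{assum_F} for the surrogate terms, and a term-by-term bound of the residual over the compact domain. Your write-up in fact supplies more detail than the paper itself, in particular the splitting $D^{nn}(f^{nn})-D(f)=(D^{nn}-D)(f^{nn})+D(f^{nn}-f)$ and the use of the divergence clause of the Assumption, which is exactly how the surrogate-induced errors must be absorbed.
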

\begin{proof}
This is similar to that of Theorem 3.4 of \cite{MR4116803} using Theorem \ref{Li_global}.
\end{proof}

We introduce our second main theorem which shows that the neural network solution $f^{nn}(t,\textbf{v};m,w,b)$ converges to a priori classical solution to the FPL equation when the total loss function\eqref{loss_total} is reduced. We use the following two properties based on the theoretical backgrounds on the FPL equation.
\begin{itemize}
    \item Based on the result of the propagation of Gaussian bounds on FPL equation from \cite{MR3778645}, we have
\begin{equation}\label{assum_decay}
f(t,\textbf{v})\leq G_1e^{-\alpha|\textbf{v}|^2},
\end{equation}for some constant $\alpha>0$, if we suppose that the initial data $f_0$ is controlled by $G_0e^{-\alpha|v|^2}$, for some constants $A_0>0$ and a sufficiently small $\alpha>0$ where $A_1$ depends on  $A_0$, $\alpha$ and $d$.
    \item From Proposition 4 in \cite{MR1737547}, we have an ellipticity property for the matrix $D(f)$. If we assume that the solution $f$ of \eqref{FPL} satisfies $f\in\widehat{C}^{(1,2,2)}([0,T]\times V)$ if $d=2$ or $f\in\widehat{C}^{(1,2,2,2)}([0,T]\times V)$ if $d=3$ and that the mass, energy and entropy of the solution is bounded above, then there exists a constant $K>0$ such that the matrix $D(f)$ satisfies
    \begin{equation}\label{assum_ellipticity}
        \forall \xi\in\mathbb{R}^d, \quad \sum_{1\leq i,j\leq d}\xi_i[D(f)]_{ij}\xi_j\geq K(1+|v|^\gamma)|\xi|^2,
    \end{equation}
    where the constant $K$ depending only on $\gamma$ and upper bounds of mass, energy and entropy of the solution $f$.
\end{itemize}
We also assume that our compact domain $V=B_R(0)\subset\mathbb{R}^d$ defined in \eqref{truncate} is chosen sufficiently large so that we can extend $f^{nn}(t,\textbf{v};m,w,b)$ as 0 on the outside of the compact domain $V$, i.e.
    \begin{equation}\label{assum_nnzero}
        f^{nn}(t,\textbf{v};m,w,b)=0,\quad\text{if }\textbf{v}\in\rone^d\setminus V.
    \end{equation}Also, we have
\begin{equation}\label{assum_bdry}
\left|\partial^{k}_{\textbf{v}}f(t,\textbf{v})-\partial^{k}_{\textbf{v}}f^{nn}(t,\textbf{v};m,w,b)\right|_{\textbf{v}\in \partial V\cup(\rone^d\setminus V)} < \epsilon,
\end{equation}for some sufficiently small $\epsilon>0$ and $|k|\leq2$.

Now, we introduce our second main theorem.
\begin{theorem}[Convergence of the neural network solution]\label{thm2} Assume that $f$ is a solution to \eqref{FPL} which belongs to $\widehat{C}^{(1,2,2)}([0,T]\times V)$ if $d=2$ or $\widehat{C}^{(1,2,2,2)}([0,T]\times V)$ if $d=3$, and the index $\gamma$ satisfies $\gamma>\max\{-\frac{d}{2}-2,-d-1\}$. If the solution $f$ and the neural network solution $f^{nn}(t,\textbf{v};m,w,b)$ satisfy \eqref{assum_decay}, \eqref{assum_nnzero} and \eqref{assum_bdry}, then it holds that 
\begin{equation}
\left\| f^{nn}(\cdot,\cdot;m,w,b) - f \right\|_{L^\infty_t([0,T];L^{2}_\textbf{v}(V))}\leq C{Loss}_{Total}(f^{nn}, D^{nn}, F^{nn})+\varepsilon_D+\varepsilon_F+\epsilon,
\end{equation}
where $C$ is a positive constant depending on $T$, $R$ and $d$ $(d=2,3)$.
\end{theorem}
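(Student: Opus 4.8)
The plan is to run a weighted $L^2_\textbf{v}$ energy estimate on the error $e \eqdef f^{nn} - f$ and to close it with Gr\"onwall's inequality. Writing $r$ for the pointwise residual of the governing equation, so that $\partial_t f^{nn} = \nabla_\textbf{v}\cdot(D^{nn}(f^{nn})\nabla f^{nn} - F^{nn}(f^{nn})f^{nn}) + r$ with $\|r\|_{L^2((0,T)\times V)}^2 = Loss_{GE}$, while the a priori solution $f$ solves \eqref{FPL} exactly, I would first subtract the two equations. The key algebraic point is that the maps $f\mapsto D(f)$ and $f\mapsto F(f)$ in \eqref{Df}--\eqref{Ff} are \emph{linear}, so the flux difference splits as
\[
D^{nn}(f^{nn})\nabla f^{nn} - F^{nn}(f^{nn})f^{nn} - D(f)\nabla f + F(f)f = D(f)\nabla e - F(f)e + \mathcal{R},
\]
where the principal term $D(f)\nabla e$ is elliptic, $F(f)e$ is a genuine drift, and the remainder $\mathcal{R}$ collects the operator-approximation errors $D^{nn}(f^{nn})-D(f^{nn})$ and $F^{nn}(f^{nn})-F(f^{nn})$ (of size $\varepsilon_D,\varepsilon_F$ by the Assumption \eqref{assum_D}--\eqref{assum_F}) together with the terms $D(e)\nabla f^{nn}$ and $F(e)f^{nn}$ produced by linearity.

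Next I would test the equation for $e$ against $e$ and integrate over $V$. One integration by parts moves the derivative onto $e$, and the boundary contributions are controlled by $\epsilon$ through \eqref{assum_nnzero} and \eqref{assum_bdry}. The principal term produces $\int_V \nabla e\cdot D(f)\nabla e\,d\textbf{v}$, which the ellipticity property \eqref{assum_ellipticity} bounds below by $K\|\nabla e\|_{L^2(V)}^2$ (using $1+|\textbf{v}|^\gamma\ge 1$). This coercive dissipation is the engine of the proof: by Young's inequality I would spend fractions of it to dominate every term carrying a factor $\nabla e$. The drift pairing $\int_V\nabla e\cdot F(f)e\,d\textbf{v}$ is absorbed using $\|F(f)\|_{L^\infty(V)}<\infty$ (finite by the Gaussian bound \eqref{assum_decay} on $f$ and the kernel integrability guaranteed by the $\gamma$ hypothesis); the $\varepsilon_D,\varepsilon_F$ parts of $\mathcal{R}$ yield forcing of order $\varepsilon_D^2+\varepsilon_F^2$; and the residual pairing $\int_V e\,r\,d\textbf{v}$ yields $\tfrac12\|e\|_{L^2(V)}^2 + \tfrac12\|r\|_{L^2(V)}^2$, whose second term integrates in time to $Loss_{GE}$.

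The substantive terms are $\int_V\nabla e\cdot D(e)\nabla f^{nn}\,d\textbf{v}$ and its $F(e)$ analogue, and controlling them uniformly is the crux of the argument. Using linearity I reduce $D(e)$ and $F(e)$ to convolutions of the error against the Landau kernel and its gradient: estimating $|\Phi(\textbf{z})|\lesssim|\textbf{z}|^{\gamma+2}$ against $e$ by Cauchy--Schwarz gives $\|D(e)\|_{L^\infty(V)}\lesssim\|e\|_{L^2(V)}$ provided $2(\gamma+2)>-d$, i.e. $\gamma>-\tfrac{d}{2}-2$, while the $F(e)$ term, in which a derivative falls on $\Phi$ so that the kernel behaves like $|\textbf{z}|^{\gamma+1}$, needs the local integrability $\gamma+1>-d$, i.e. $\gamma>-d-1$; these are exactly the thresholds combined in $\gamma>\max\{-\tfrac{d}{2}-2,-d-1\}$. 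To handle the factors $\nabla f^{nn}$ and $f^{nn}$ without letting the constant depend on the network, I would write $\nabla f^{nn}=\nabla f+\nabla e$ and use the $C^1$-closeness of $f^{nn}$ to the smooth, decaying solution $f$ supplied by Theorem \ref{thm1}, so that all network derivatives are bounded by fixed norms of $f$. After these bounds, half of $K\|\nabla e\|_{L^2(V)}^2$ is absorbed and the remaining contributions of $D(e),F(e)$ are dominated by $C\|e\|_{L^2(V)}^2$.

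Collecting everything leaves a differential inequality $\frac{d}{dt}\|e(t)\|_{L^2(V)}^2 \le C\|e(t)\|_{L^2(V)}^2 + C\|r(t)\|_{L^2(V)}^2 + C(\varepsilon_D^2+\varepsilon_F^2+\epsilon^2)$. Integrating in time, using $\|e(0)\|_{L^2(V)}^2 = Loss_{IC}$ and $\int_0^T\|r\|_{L^2(V)}^2\,dt = Loss_{GE}$, and applying Gr\"onwall's inequality bounds $\sup_{t\in[0,T]}\|e(t)\|_{L^2(V)}^2$ by $C(Loss_{Total}+\varepsilon_D^2+\varepsilon_F^2+\epsilon^2)$, which gives the asserted control of $\|f^{nn}-f\|_{L^\infty_t([0,T];L^2_\textbf{v}(V))}$. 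I expect the main obstacle to be precisely the uniform-in-$f^{nn}$ treatment of the operator-difference terms: because the diffusion coefficient $D(f)$ itself depends on the unknown, the error equation is quasilinear, and the coupled term $\int_V\nabla e\cdot D(e)\nabla e\,d\textbf{v}$ (hidden in $\nabla f^{nn}=\nabla f+\nabla e$) must be tied to a small factor and absorbed into the dissipation; arranging this, together with verifying the sharp kernel integrability under the $\gamma$ restriction, is the delicate part, and it is what forces the constant $C$ to depend only on $T$, $R$, $d$ and fixed bounds on $f$ rather than on the network.
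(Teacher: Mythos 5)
Your overall strategy coincides with the paper's proof in Appendix A: an $L^2_\textbf{v}$ energy estimate on the error $e=f-f^{nn}$, integration by parts with boundary contributions controlled by $\epsilon$ through \eqref{assum_nnzero} and \eqref{assum_bdry}, the linearity of $f\mapsto D(f)$ and $f\mapsto F(f)$ to convert $D(f)-D(f^{nn})$ and $F(f)-F(f^{nn})$ into convolutions of the error against $\Phi$ and its derivative, a near/far splitting of those convolutions using the Gaussian bound \eqref{assum_decay}, exactly the two kernel-integrability thresholds $\gamma>-\frac{d}{2}-2$ and $\gamma>-d-1$, coercivity from the ellipticity \eqref{assum_ellipticity} to absorb all gradient terms, and Gr\"onwall with $\|e(0)\|_{L^2}^2=Loss_{IC}$ and $\int_0^T\|r\|_{L^2}^2\,dt=Loss_{GE}$. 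Up to which term is handled by Cauchy--Schwarz and which by Young's convolution inequality, this is the same argument.

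The one place you genuinely deviate is also where your argument has a gap: the treatment of the factors $\nabla f^{nn}$ and $f^{nn}$ multiplying $D(e)$ and $F(e)$. You propose to write $\nabla f^{nn}=\nabla f+\nabla e$ and to control the network derivatives by the ``$C^1$-closeness of $f^{nn}$ to $f$ supplied by Theorem \ref{thm1}.'' Theorem \ref{thm1} supplies no such thing: it is an existence statement (for every $\varepsilon$ there exist parameters with small loss) and says nothing about the particular network $f^{nn}$ appearing in Theorem \ref{thm2}, whose only hypotheses are \eqref{assum_decay}, \eqref{assum_nnzero} and \eqref{assum_bdry}; the last gives closeness only on $\partial V\cup(\rone^d\setminus V)$, not in the interior. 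Once you expand $\nabla f^{nn}=\nabla f+\nabla e$, the term $\int_V\nabla e\cdot D(e)\nabla e\,d\textbf{v}$ that you flag is cubic in the error, of size roughly $\|e\|_{L^2}\|\nabla e\|_{L^2}^2$; it cannot be absorbed into $K\|\nabla e\|_{L^2}^2+C\|e\|_{L^2}^2$ by Young's inequality without an a priori smallness bound on $\|e\|_{L^2}$ --- which is precisely the quantity the theorem is estimating --- so the Gr\"onwall loop does not close as written (a bootstrap or continuity argument might rescue it, but you have not set one up). The paper avoids the issue by not expanding at all: $f^{nn}$ is a fixed smooth function on the compact set $[0,T]\times V$, so $\nabla f^{nn}$ is bounded, and that sup-norm is simply swallowed into the constants ($M_1$, $M_5$, $M_9$ in the appendix). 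This keeps the estimate linear in the error and closes Gr\"onwall --- at the cost that those constants implicitly depend on the network's $C^1$ bound, the very dependence you were trying to remove. If you adopt that simple device in place of your expansion, your proof becomes the paper's.
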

\begin{proof}
The proof of this theorem is provided in Appendix \ref{appendix:proof}.
\end{proof}
\begin{remark}[Connection between Theorem \ref{thm1} and Theorem \ref{thm2}]
    Theorem \ref{thm1} shows the existence of the neural network parameters $m$, $w$, $b$, $\theta_D$ and $\theta_F$, which make the total loss \eqref{loss_total} as small as we want. If we want to approximate the neural network solution $f^{nn}$ close to a priori solution $f$, the deep learning algorithm (PINN) is used to find the neural network parameters which make the total loss small so that the error $\|f^{nn}-f\|$ is reduced based on Theorem \ref{thm2}.
\end{remark}
\section{Simulation Results}\label{sec:simulation}
We next present the simulation results of the opPINN framework in two subsections. We first show the results on the operator surrogate models $D^{nn}(f;\theta_D)$ and $F^{nn}(f;\theta_F)$. It contains the results on the surrogate model combined with the traditional numerical methods to approximate the collision term $Q(f,f)$. The following subsection shows the results on the neural network solution to the FPL equation obtained from the opPINN framework. We compare our method with the fast spectral method from \cite{MR1795398}, which is well-known methods for simulating the FPL equation. The spectral method is implemented in Python using NumPy library.

\begin{figure}[H]
  \includegraphics[width=\textwidth, draft=false]{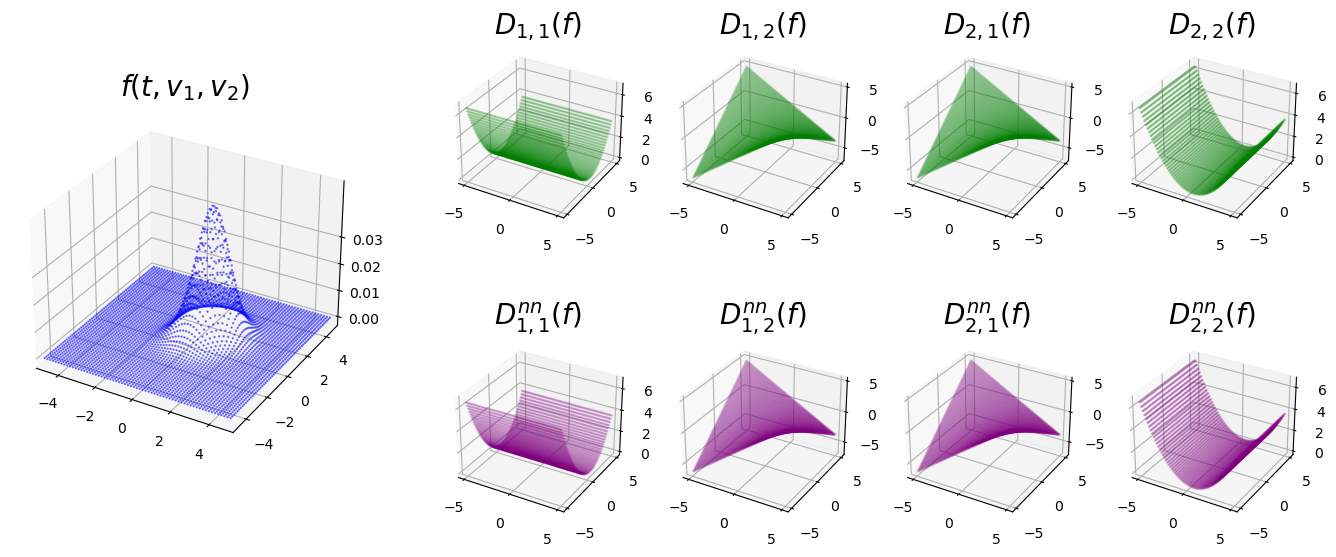}
  \caption{Examples of inputs (blue) and outputs (purple) of the operator surrogate model $D^{nn}(f;\theta_D)$ for distribution $f$ which is not in train data. The upper row plots are the target values $D(f)$ (green). Note that $D(f)\in\rone^{2\times2}$ when $d=2$.}
  \label{fig:operator_D}
\end{figure}

\begin{figure}[H]
  \includegraphics[width=\textwidth, draft=false]{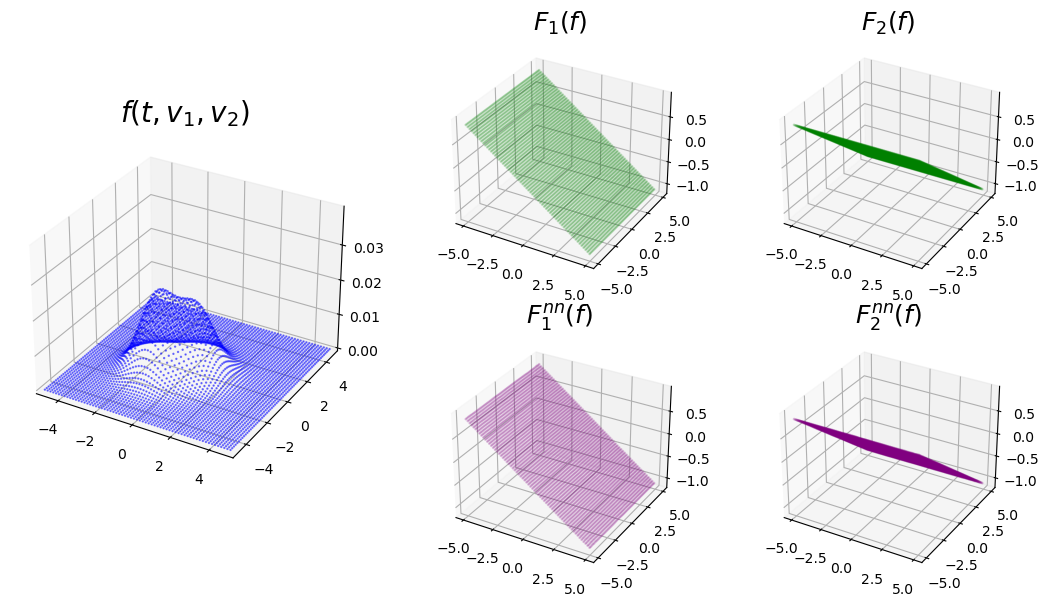}
  \caption{Examples of inputs (blue) and outputs (purple) of the operator surrogate model $F^{nn}(f;\theta_F)$ for distribution $f$ which is not in train data. The upper row plots are the target values $F(f)$ (green). Note that $F(f)\in\rone^2$ when $d=2$.}
  \label{fig:operator_F}
\end{figure}

\subsection{Results on Step 1: the operator surrogate models}
In this section, we focus on the results of the operator learning to approximate the two terms $D(f)$ in \eqref{Df} and $F(f)$ in \eqref{Ff} during Step 1. As explained in Section \ref{subsec:step1}, the two convolution encoder-decoders are used as the operator surrogate model to approximate the each operator $D(f)$ and $F(f)$. Figure \ref{fig:operator_D} shows the target value $D(f)$ and the output of the trained operator surrogate model $D^{nn}(f;\theta_D)$ for the random distribution function $f$ which is not in train data. Here, the figure shows the two-dimensional case with $N=64$. It shows that the output of the operator surrogate model is well matched to the target value for each of the four components of the matrix $D(f)$. The average of relative $L^2$ errors for the 100 test samples are less than 0.006. Figure \ref{fig:operator_F} shows the result on the operator surrogate model $F^{nn}(f;\theta_F)$ with $N=64$. It also shows that the target value $F(f)$ is well approximated where the average of relative $L^2$ errors for 100 test samples is 0.008.

\begin{figure}[H]
  \includegraphics[width=\textwidth, draft=false]{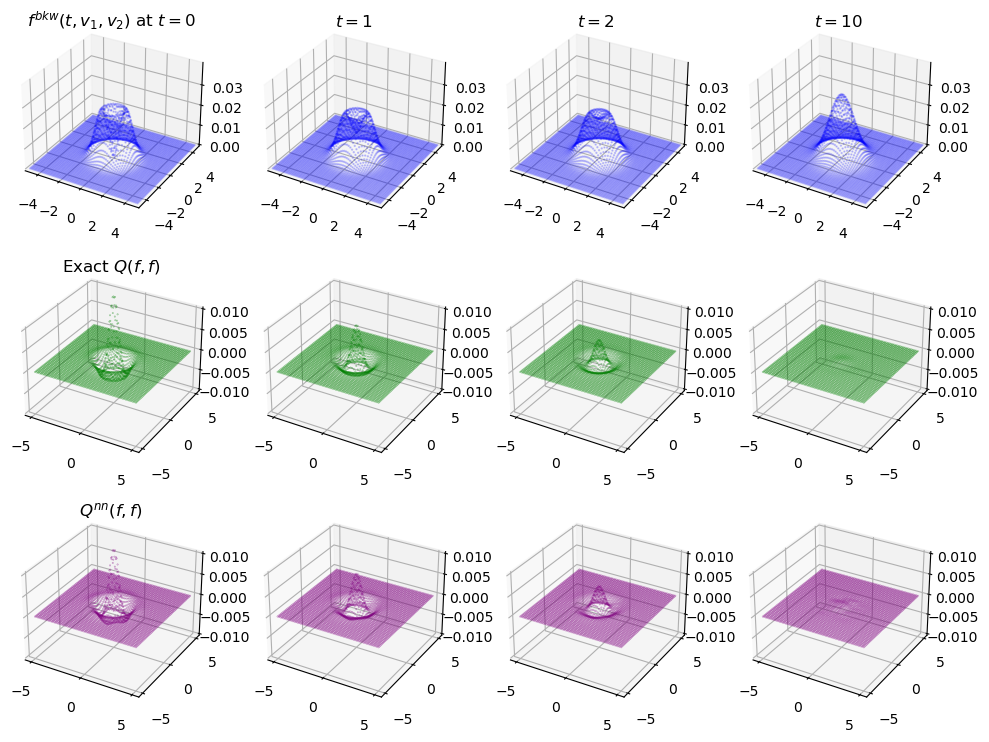}
  \caption{The approximated Landau collision operator $Q^{nn}(f,f)$ (purple in third row) using the operator surrogate models $D^{nn}(f;\theta_D)$ and $F^{nn}(f;\theta_F)$ combined with the FDM for the BKW solutions $f=f^{bkw}$. The first row shows the BKW solutions $f^{bkw}$ at time $t=0,1,2,10$ (blue) and the second row shows the exact value of the Landau collision operator $Q(f^{bkw},f^{bkw})$ (green).}
  \label{fig:operator_Q}
\end{figure}

Using the two operator surrogate models, the solution to the FPL equation is approximated using PINN as described in the following subsection (Section \ref{subsec:step2}) which shows the results of the proposed opPINN framework. However, the operator surrogate models can also be combined with the traditional numerical schemes to approximate the FPL equation. It is possible to directly approximate the Landau collision operator $Q(f,f)$ by calculating the derivative of distribution function $f$ using the FDM. Figure \ref{fig:operator_Q} shows the direct approximation of collision term $Q(f,f)$ using the operator surrogate models $D^{nn}(f;\theta_D)$ and $F^{nn}(f;\theta_F)$ with $N=64$ combined with the FDM to approximate the derivative of distribution function $f$. To check that the collision term $Q(f,f)$ is well approximated, the sample of the function $f$ and the corresponding collision term are chosen as an exact solution of the FPL equation, called BKW solution. The BKW solution is one of well-known exact solutions to the FPL equation (See appendix in \cite{MR4121055}). To make the volume of BKW solution as 0.2, we choose the normalized BKW solution that is given by
\begin{equation}\label{bkw}
    f^{\text{bkw}}(t,\textbf{v})=\frac{1}{10\pi K^2}\exp\left(-\frac{|\textbf{v}|^2}{2K}\right)\left(2K-1+\frac{1-K}{2K}|\textbf{v}|^2\right),\; K=1-\frac{1}{2}\exp\left(-\frac{t}{8}\right),
\end{equation}
which $f^{\text{bkw}}(t,\textbf{v})$ has volume 0.2 when the constant of collision kernel \eqref{kernel} is $\Lambda=\frac{5}{16}$. The corresponding collision operator $Q(f^{\text{bkw}},f^{\text{bkw}})$ can also be explicitly calculated. Figure \ref{fig:operator_Q} shows the exact value of $Q(f,f)$ and approximated $Q^{nn}(f,f)$ for the solutions $f=f^{\text{bkw}}(t,\textbf{v})$ at $t=0,1,2,10$. It shows that the Landau collision term $Q(f,f)$ is well approximated using the operator surrogate models combined with the FDM.

We also expect that the method using the proposed operator surrogate models combined with the FDM is faster than the traditional numerical methods for approximating the Landau collision operator. The reason is that the operator surrogate models only need an inference time after the models are trained using the train data. To confirm this, we compare the computational time to approximate collision operator $Q(f,f)$ using our operator surrogate model with the spectral method in different modes of velocity. For the spectral method, the FPL kernel modes are pre-calculated (see \cite{MR1795398} for details) so that the computational time for the pre-calculation is excluded in the experiment. The proposed method has a larger computational time when the modes of velocity is small as shown in Figure \ref{fig:operator_speed}. However, the spectral method dramatically increases the computational time as the mode becomes larger. It is well known that the computational complexity of the spectral method is known as $O(N\log N)$. On the other hand, the computational cost for the proposed method increased more slowly as the mode becomes larger. It is expected that the time gap of the two methods becomes larger as the number of velocity modes are increasing. Therefore, the proposed method may have great advantages when a fast computation is required in a large modes of velocity.

\begin{figure}[H]
  \includegraphics[width=\textwidth, draft=false]{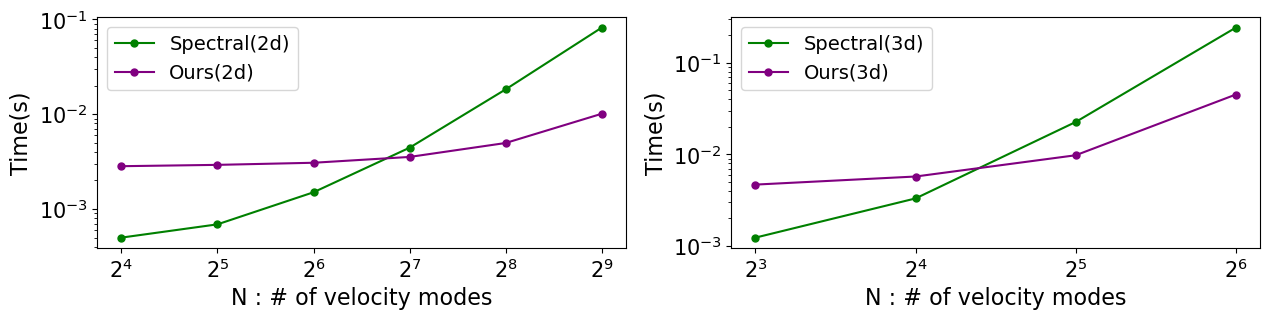}
  \caption{The comparison of the computational time for approximating the collision operator 100 times as the number of the velocity modes $N$ varies. The proposed operator surrogate model with FDM (purple) is compared with the spectral method (green) for two-dimensional (left) and three-dimensional (right) FPL equation.}
  \label{fig:operator_speed}
\end{figure}

Using the approximated collision operator $Q^{nn}(f,f)$, the FPL equation also can be solved by combining it with the Euler method. We compare the proposed method with the spectral method to simulate the FPL equation with the BKW initial condition. The approximated solutions using the two methods and the exact value of BKW solution at time $t=0$ to $3$ is compared in Figure \ref{fig:operator_euler} when $N=64$ and $N=128$. It shows that the proposed method is less accurate than the spectral method when the number of velocity modes are $N=64$. The error is generated from the FDM and is accumulated over time steps. In the case of $N=128$, the proposed method has a similar accuracy with the spectral method as shown in Figure \ref{fig:operator_euler}.

\begin{figure}[H]
  \includegraphics[width=1\textwidth, draft=false]{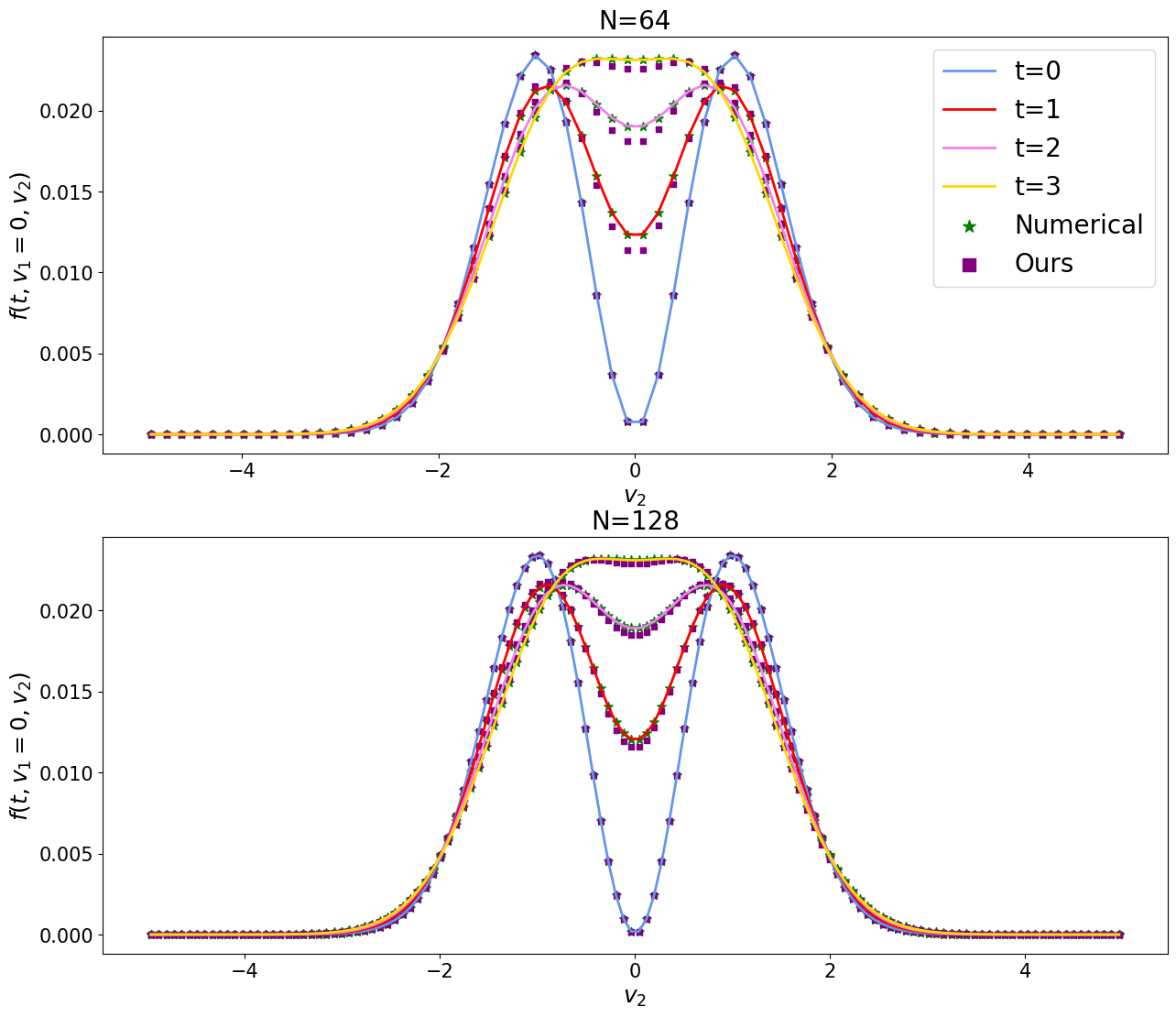}
  \caption{The approximated solution to the two-dimensional FPL equation using the spectral method with Euler method (green points) and the operator surrogate model combined with the FDM and the Euler method (purple points) with $N=64$ (upper) and $N=128$ (lower). The exact values of BKW solution $f^{bkw}$ at $t=0,1,2,3$ are shown in different colors for reference.}
  \label{fig:operator_euler}
\end{figure}

\subsection{Results on Step 2: the neural network solution to the FPL equation using the opPINN framework}\label{subsec:step2}We next show the neural network solution to the FPL equation using the proposed opPINN framework. The trained operator surrogate models $D^{nn}(f;\theta_D)$ and $F^{nn}(f;\theta_F)$ with $N=64$ during Step 1 are used in this section. We test the opPINN framework for the FPL equation by varying the initial conditions, collision types, and velocity dimension. In each experiment, we provide the pointwise values of our neural network solution $f^{nn}(t,\textbf{v};m,w,b)$ and the physical quantities of the neural network solution, e.g. mass $\text{Mass}(t)$, momentum $\text{Mom}(t)$, kinetic energy $\text{KE}(t)$ (Eq. \eqref{physical_quantities}) and entropy $\text{Ent}(t)$ (Eq. \eqref{entropy}). We approximate the integral in these physical quantities using Gauss-Legendre quadrature on the grid points. We compare our results with the existing theoretical results as explained in Section \ref{properties}. Furthermore, we observe the long-time behavior of the neural network solution $f^{nn}$ to check the convergence of the neural network solution to Maxwellian given in \eqref{maxwellian}.

\subsubsection{Results by varying the initial conditions} First, we impose Maxwellian initial condition given by
\begin{equation}\label{ini_onegauss}
    f(0,\textbf{v})=0.2\times\frac{1}{2\pi}\exp\left(-\frac{|\textbf{v}|^2}{2}\right)
\end{equation}with $\Lambda=1$ for the collision kernel. Since the initial condition is already steady-state, we expect the neural network solution $f^{nn}(t,v_1,v_2;m,w,b)$ keeps the initial condition as the time increases. Figure \ref{fig:opPINN_ini_onegauss} shows the pointwise values of $f^{nn}$ as time $t$ varies. Note that the first row in the figure shows the 3-d plots of the distribution $f(t,v_1,v_2)$ and the second row shows its cross section with $v_1=0$ for visualization purposes. As we expect, the neural network solution does not change from $t=0$ to $t=5$. 

\begin{figure}[H]
  \includegraphics[width=\textwidth, draft=false]{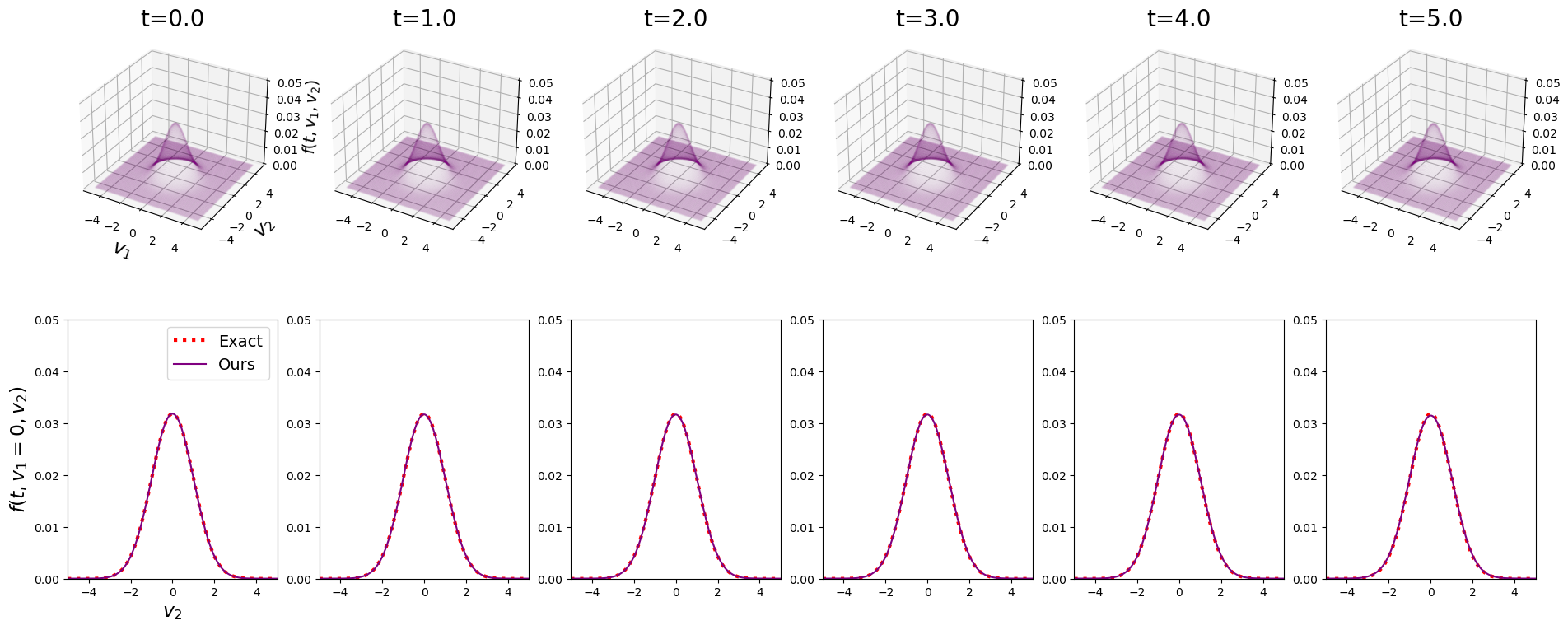}
  \caption{The pointwise values of $f^{nn}(t,v_1,v_2;m,w,b)$ as $t$ varies for Maxwellian initial condition \eqref{ini_onegauss}. Note that the upper row shows the 3-d plots of the distribution $f(t,v_1,v_2)$ and the lower row shows its cross section with $v_1=0$ for visualization purposes. The values of the exact solutions are given via the red-dotted lines. Note that the initial condition is the exact solution at all times in this case.}
  \label{fig:opPINN_ini_onegauss}
\end{figure}

Second, we use BKW solution at time $t=0$ as the initial condition given by
\begin{equation}\label{ini_bkw}
    f(0,\textbf{v})=f^{\text{bkw}}(0,\textbf{v}),
\end{equation}as explained in \eqref{bkw} with $\Lambda=\frac{5}{16}$ for the collision kernel. It is one of well-known exact solutions to the FPL equation, which is used to test the numerical methods in many numerical studies \cite{MR4121055,carrillo2021random,MR1795398}. Figure \ref{fig:opPINN_ini_bkw} shows the pointwise values of $f^{nn}(t,v_1,v_2;m,w,b)$ from $t=0$ to $t=5$. We compare the neural network solution with the exact solution $f^{ext}$ that is plotted in red dotted line. It shows the neural network solution is well approximated. Also, Figure \ref{fig:opPINN_ini_bkw_physical} shows the physical quantities for the neural network solution. The red dotted lines show each initial physical quantity of the initial distribution. It shows the neural network solution obtained by the proposed method satisfies the conservation of the mass, momentum and kinetic energy. In particular, the entropy of the neural network solution is a non-increasing function which is consistent to the analytical property given in \eqref{entropy_nonincrease}.

\begin{figure}[H]
  \includegraphics[width=\textwidth, draft=false]{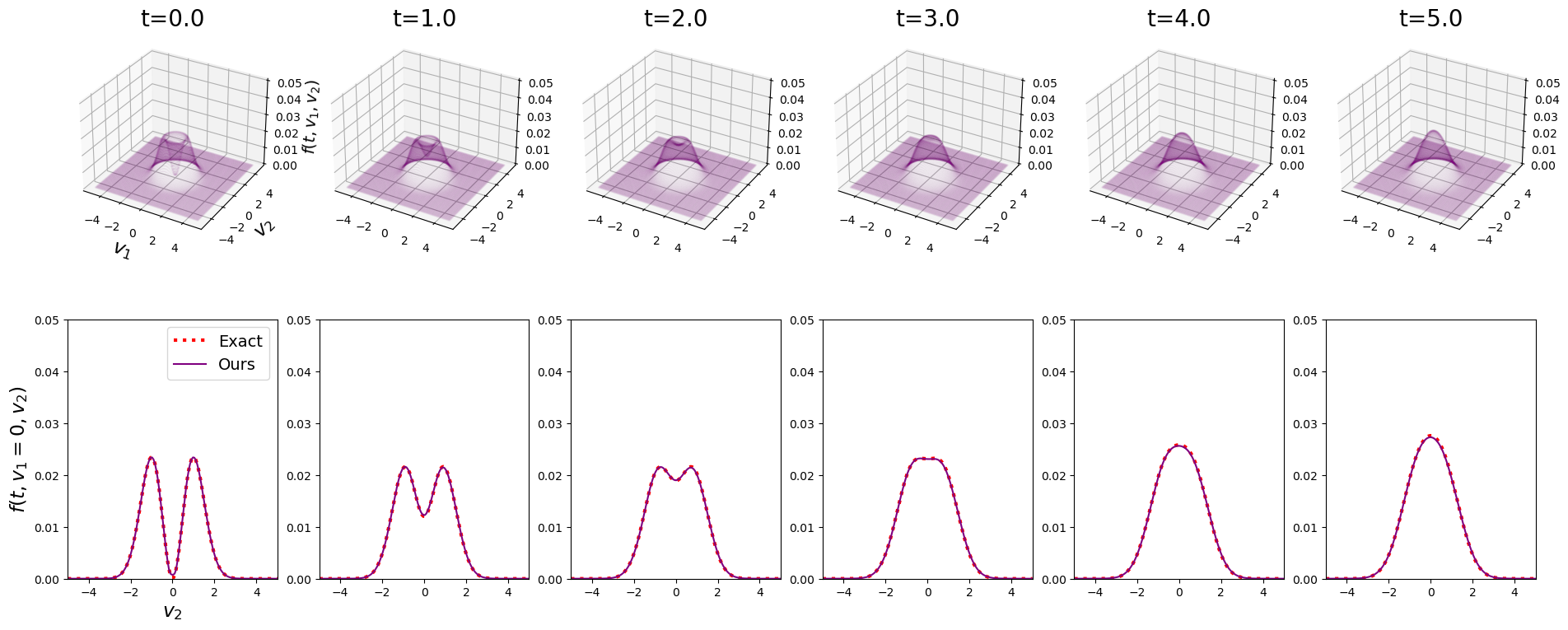}
  \caption{The pointwise values of $f^{nn}(t,v_1,v_2;m,w,b)$ as $t$ varies for BKW initial condition \eqref{ini_bkw}. Note that the upper row shows the 3-d plots of the distribution $f(t,v_1,v_2)$ and the lower row shows its cross section with $v_1=0$ for visualization purposes. The exact values of the BKW solution are given via the red-dotted lines.}
  \label{fig:opPINN_ini_bkw}
\end{figure}

\begin{figure}[H]
  \includegraphics[width=\textwidth, draft=false]{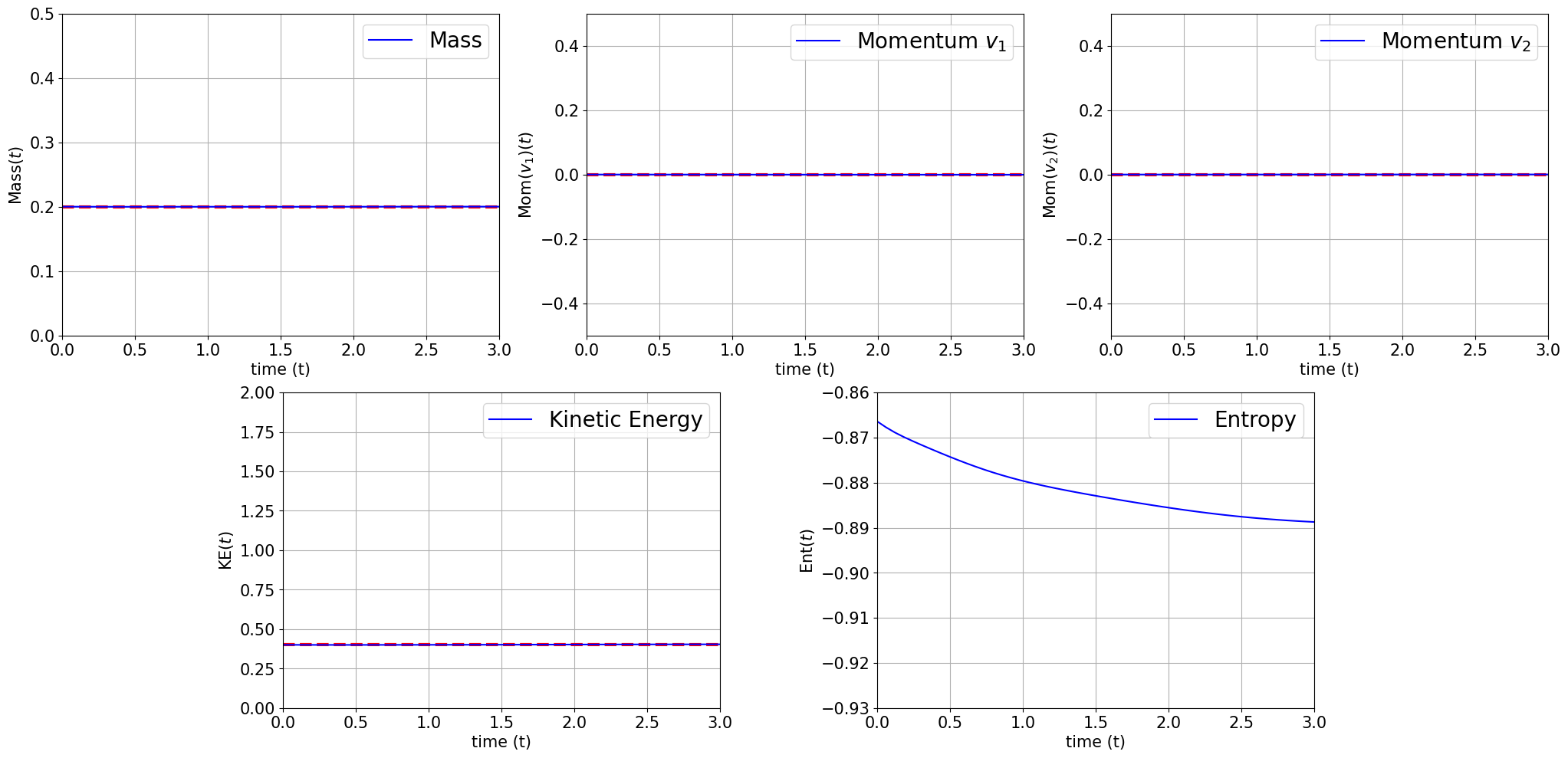}
  \caption{The physical quantities of $f^{nn}(t,v_1,v_2;m,w,b)$ for BKW initial condition \eqref{ini_bkw}. The initial values of the physical quantities are given via the red-dotted line. Note that the Entropy is a non-increasing function.}
  \label{fig:opPINN_ini_bkw_physical}
\end{figure}

\begin{figure}[H]
  \includegraphics[width=\textwidth, draft=false]{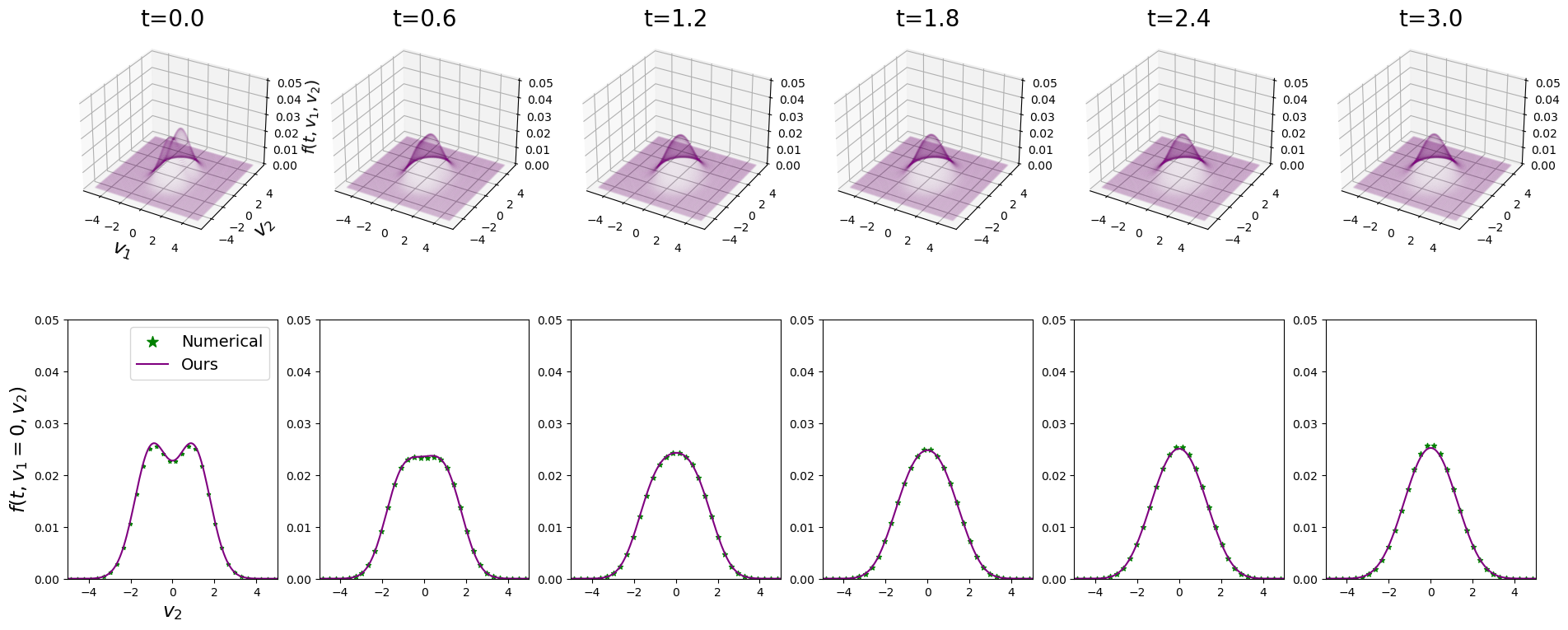}
  \caption{The pointwise values of $f^{nn}(t,v_1,v_2;m,w,b)$ as $t$ varies for two-Gaussian initial condition \eqref{ini_col}. Note that the upper row shows the 3-d plots of the distribution $f(t,v_1,v_2)$ and the lower row shows its cross section with $v_1=0$ for visualization purposes. The values of numerical solutions (spectral method) are given via the green star points.}
  \label{fig:opPINN_col_pointwise}
\end{figure}

\begin{figure}[H]
  \includegraphics[width=\textwidth, draft=false]{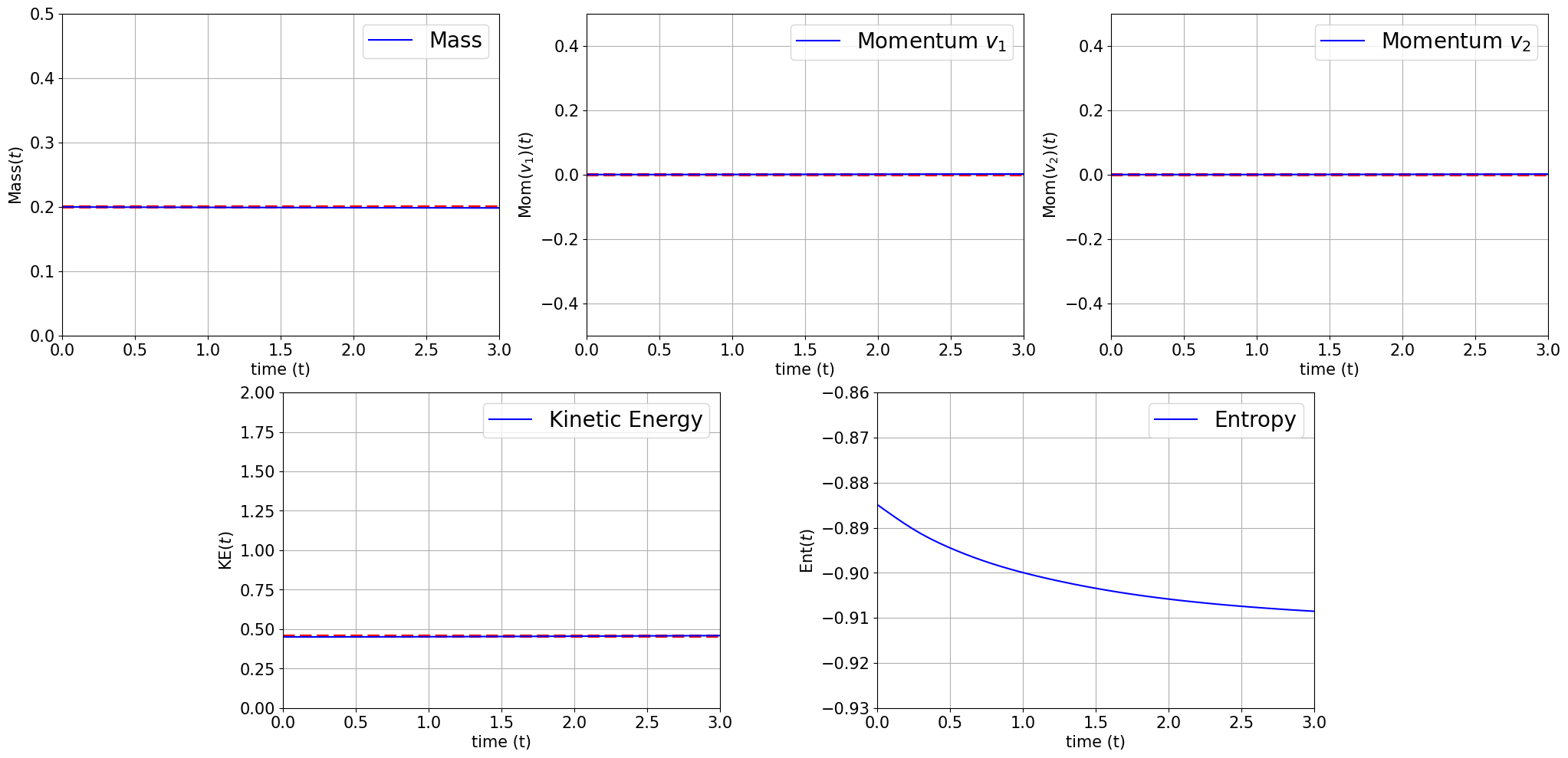}
  \caption{The physical quantities of $f^{nn}(t,v_1,v_2;m,w,b)$ for two-Gaussian initial condition \eqref{ini_col}. The initial values of the physical quantities are given via the red-dotted line. Note that the Entropy is a non-increasing function.}
  \label{fig:opPINN_col_physical}
\end{figure}

\subsubsection{Results for Columbian case $(\gamma=-3)$} We also change the index of power $\gamma$ in collision kernel \eqref{kernel} to test the proposed method for other collision types. We set the $\gamma=-3$ that is the Columbian potential case. We impose two-Gaussian initial condition for the initial condition given by
\begin{equation}\label{ini_col}
    f(0,\textbf{v})=\frac{1}{C}\left(\frac{1}{2\pi\sigma^2}\exp\left(-\frac{|\textbf{v}-c_1|^2}{2\sigma^2}\right)+\frac{1}{2\pi\sigma^2}\exp\left(-\frac{|\textbf{v}-c_2|^2}{2\sigma^2}\right)\right)
\end{equation}where $c_1=(0,1)$, $c_2=(0,-1)$, $\sigma=0.8$ and $C$ is the normalized constant to make the volume 0.2 with $\Lambda=5$ for the collision kernel. The initial condition is used to test the numerical methods in many numerical studies \cite{MR4108209,MR4121055,MR1795398} Since we do not have the exact solution in this case, the numerical solutions obtained by the spectral method are used for comparison. The spectral method with $N=32$ combined with the Euler method ($\Delta t=0.001$) is used to approximate the FPL equation.

Figure \ref{fig:opPINN_col_pointwise} shows the pointwise values of $f^{nn}(t,v_1,v_2;m,w,b)$ from $t=0$ to $t=3$. The neural network solution is well matched to the spectral method that is plotted in green star points. It shows the proposed method also works well for the Columbian potential case. In addition, to ensure that the neural network solutions are well approximated, we also check the physical quantities over time. Figure \ref{fig:opPINN_col_physical} shows the mass, the momentum, the kinetic energy and the entropy of the neural network solution. As we expect, the mass, the momentum for each velocity direction, and kinetic energy are preserved as time varies. Also, the entropy of the neural network solution is a non-increasing function.

\begin{figure}[H]
  \includegraphics[width=\textwidth, draft=false]{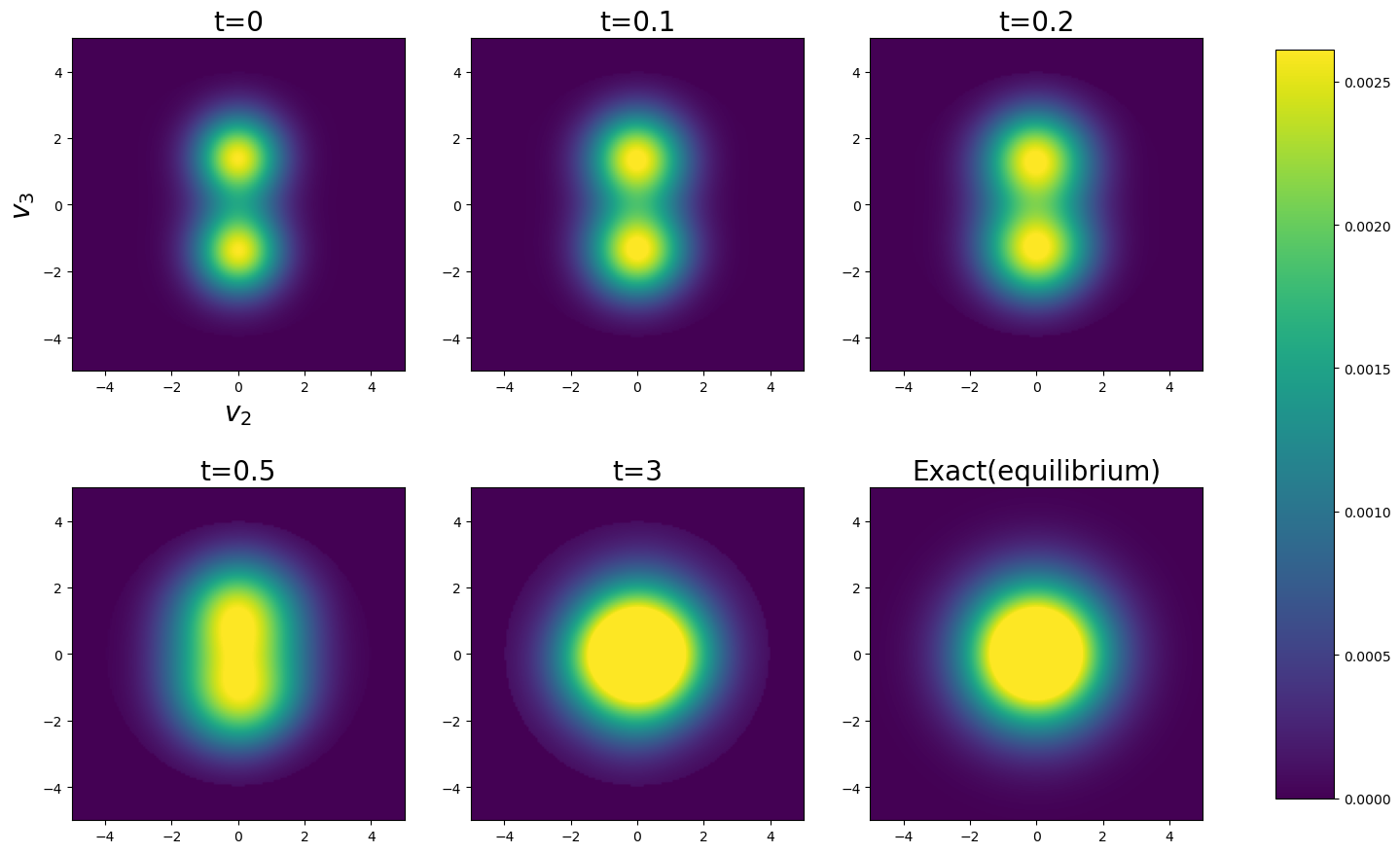}
  \caption{The pointwise values of $f^{nn}(t,v_1,v_2,v_3;m,w,b)$ as $t$ varies for two-Gaussian initial condition \eqref{ini_3d}. Note that figure shows the cross section of $f^{nn}(t,v_1,v_2,v_3;m,w,b)$ with $v_1=0$ for visualization purposes. The right plot in the lower row shows the steady-state values of the FPL equation.}
  \label{fig:opPINN_3d_pointwise}
\end{figure}

\subsubsection{Results for 3-dimensional FPL equation $(d=3)$} Furthermore, we apply the proposed opPINN framework for the 3-dimensional FPL equation. Similar to other studies \cite{filbet2020spectral}, we focus on the test for the convergence of the neural network solution to the equilibrium, a 3-dimensional Maxwellian function. We impose the two-Gaussian initial condition
\begin{equation}\label{ini_3d}
    f(0,\textbf{v})=\frac{1}{C}\left(\frac{1}{(2\pi\sigma^2)^\frac{3}{2}}\exp\left(-\frac{|\textbf{v}-c_1|^2}{2\sigma^2}\right)+\frac{1}{(2\pi\sigma^2)^\frac{3}{2}}\exp\left(-\frac{|\textbf{v}-c_2|^2}{2\sigma^2}\right)\right)
\end{equation}where $c_1=(1.4,1.4,0)$, $c_2=(-1.4,-1.4,0)$, $\sigma=0.9$ and $C$ is the normalized constant to make the volume 0.2 with $\Lambda=1$ for the collision kernel. Figure \ref{fig:opPINN_3d_pointwise} shows pointwise values of $f^{nn}(t,v_1,v_2,v_3;m,w,b)$ as time $t$ varies and the analytic equilibrium. For the visualization purpose, the cross sections of the neural network solution $f^{nn}(t,v_1,v_2,v_3;m,w,b)$ with $v_1=0$ are plotted in two-dimensional images. It shows that the neural network solution at $t=3$ converges to the equilibrium.

\section{Conclusion}\label{sec:conclusion}
This paper proposes the opPINN framework to approximate the solution to the FPL equation. The opPINN framework is a hybrid method using the operator learning approach with PINN. It provides a mesh-free neural network solution by effectively approximating the complex Landau collision operator of the equation. The experiments on the FPL equation under the various setting show that the neural network solution satisfies the known analytic properties on the equation. Furthermore, we show that the approximated neural network solutions converge to the classical solutions of the FPL equation based on the theoretical properties of the FPL equation.

Other models for operator learning are developed lately including DeepONet \cite{lu2019deeponet} and Neural Operator \cite{li2020neural}. Although the convolution encoder-decoder networks are used for the operator surrogate model, other structures for operator learning also can be used in our framework to improve performances. The non-homogeneous FPL equation and the FPL equation on the bounded domain are possible extensions of our work. Boltzmann equation has a more complex collision term than the FPL equation. For future work, we expect that the proposed framework can be applied to the Boltzmann equation for various collision kernels.

\section*{Acknowledgement}
H. J. Hwang was supported by the National Research Foundation of Korea (NRF) grant funded by the Korea government (MSIT) (NRF-2017R1E1A1A03070105, NRF-2019R1A5A1028324) and by Institute for Information \& Communications Technology Promotion (IITP) grant funded by the Korea government(MSIP) (No.2019-0-01906, Artificial Intelligence Graduate School Program (POSTECH)). J. Y. Lee was supported by a KIAS Individual Grant (AP086901) via the Center for AI and Natural Sciences at Korea Institute for Advanced Study and by the Center for Advanced Computation at Korea Institute for Advanced Study.

\appendix
\section{Proof of Theorem \ref{thm2}}\label{appendix:proof}
\begin{proof}[Proof of Theorem \ref{thm2}]For convenience, assume $\Lambda=1$ in \eqref{kernel}. We define the residual error of the neural network output $f^{nn}$ as follows:
$$d_{ge}(t,\textbf{v})\eqdef -\partial_tf^{nn}(t,\textbf{v})+\nabla_\textbf{v}\cdot\left(D^{nn}(f^{nn})\nabla f^{nn}-F^{nn}(f^{nn})f^{nn}\right).$$
Then, we consider the following equation on the difference between $f$ and $f^{nn}$ as
\begin{multline}\label{ge_diff}
\partial_t(f-f^{nn})-\nabla_\textbf{v}\cdot\left(\left(D(f)\nabla f-D^{nn}(f^{nn})\nabla f^{nn}\right)-\left(F(f)f-F^{nn}(f^{nn})f^{nn}\right)\right)\\
=d_{ge}(t,\textbf{v}) 
\end{multline}
for $(t,\textbf{v})\in[0,T]\times V$. We derive the energy identity by multiplying $(f-f^{nn})$ onto \eqref{ge_diff} and integrating it over $V$ as
\begin{multline}\label{energy_eq}
\frac{1}{2}\frac{d}{dt}\|(f-f^{nn})(t,\cdot)\|^2_{L^{2}_v(V)}-\underbrace{\int_Vd\textbf{v}(f-f^{nn})\nabla_\textbf{v}\cdot\left(D(f)\nabla f-D^{nn}(f^{nn})\nabla f^{nn}\right)}_{A(t)}\\+\underbrace{\int_Vd\textbf{v}(f-f^{nn})\nabla_\textbf{v}\cdot\left(F(f) f-F^{nn}(f^{nn}) f^{nn}\right)}_{B(t)}=\int_Vd\textbf{v}(f-f^{nn})d_{ge}(t,\textbf{v}).
\end{multline}
For the right-hand side of \eqref{energy_eq}, we note that
\begin{equation}
\int_Vd\textbf{v}(f-f^{nn})d_{ge}(t,\textbf{v})\leq\frac{1}{2}\|f-f^{nn}\|_{L^2_\textbf{v}( V)}^2 + \frac{1}{2}\|d_{ge}(t,\textbf{v})\|_{L^2_\textbf{v}( V)}^2.
\end{equation}
Also, note that
\begin{align}\label{At_all}
A(t)=&\int_Vd\textbf{v}(f-f^{nn})\nabla_\textbf{v}\cdot\left[(D(f)-D^{nn}(f^{nn}))\nabla f^{nn}+D(f)\nabla_\textbf{v}(f-f^{nn})\right]\\
=&\underbrace{\int_{\partial V}dS(f-f^{nn})[(D(f)-D^{nn}(f^{nn}))\nabla f^{nn}+D(f)\nabla_\textbf{v}(f-f^{nn})]\cdot \hat{n}}_{A_1(t)}\notag\\
&-\underbrace{\int_Vd\textbf{v}\nabla_\textbf{v}(f-f^{nn})\cdot[(D(f)-D^{nn}(f^{nn}))\nabla f^{nn}+D(f)\nabla_\textbf{v}(f-f^{nn})]}_{A_2(t)}\notag
\end{align}
by the divergence theorem where $\hat{n}$ is the outward unit normal vector on the boundary $\partial V$. Since $f^{nn}\in C^{(1,2,2)}$ or $C^{(1,2,2,2)}$ on the compact domain $[0,T]\times V$, $\nabla f^{nn}$ is bounded. We also have the boundedness of each entry of $D(f)$ since the function $D(f)(\textbf{v})$ is continuous function with respect to $\textbf{v}$ on the compact domain $V$. Using the assumption \eqref{assum_bdry}, we deduce that
\begin{equation}
A_1(t)\leq M_1\epsilon\int_{\partial V}dS \sum_{1\leq i,j\leq d}|(D(f)-D^{nn}(f^{nn}))_{ij}| + M_2\epsilon
\end{equation}
for some positive constant $M_1$ and $M_2$. Note that
\begin{align}\label{D_D}
    |(D(f)-D^{nn}(f^{nn}))_{ij}|=&|(D(f)-D(f^{nn}))_{ij}+(D(f^{nn})-D^{nn}(f^{nn}))_{ij}|\\
    \leq& \left|\int_{\rone^d}d\textbf{v}_*\Phi(\textbf{v}_*)_{ij}(f-f^{nn})(\textbf{v}-\textbf{v}_*)\right|+\|D-D^{nn}\|_{op}\notag.
\end{align}
We can bound the first integration term in the right side of \eqref{D_D} as
\begin{align*}
    \bigg|\int_{\rone^d}d\textbf{v}_*\Phi(\textbf{v}_*)_{ij}&(f-f^{nn})(\textbf{v}-\textbf{v}_*)\bigg|\\
    \leq&\int_{\rone^d}d\textbf{v}_*|\textbf{v}_*|^\gamma\left||\textbf{v}_*|^2\delta_{ij}-(\textbf{v}_*)_i(\textbf{v}_*)_j\right||(f-f^{nn})(\textbf{v}-\textbf{v}_*)|\\
    \leq& \int_{\rone^d}d\textbf{v}_*|\textbf{v}_*|^{\gamma+2}|(f-f^{nn})(\textbf{v}-\textbf{v}_*)|= \int_{\rone^d\setminus B_{2R}(0)}d\textbf{v}_*+\int_{B_{2R}(0)}d\textbf{v}_*.
\end{align*}
Since the truncation velocity domain $V=B_R(0)$ is chosen sufficiently large, we bound the first integration as
\begin{align}\label{out_bound}
    \int_{\rone^d\setminus B_{2R}(0)}d\textbf{v}_* |\textbf{v}_*|^{\gamma+2}|&(f-\underbrace{f^{nn}}_{=0})(\textbf{v}-\textbf{v}_*)|\\
    \leq& G_1\int_{2R}^\infty dr\int_{u\in\partial S^{d-1}}dS\; r^{\gamma+2+d-1}e^{-\alpha|r\textbf{u}-\textbf{v}|^2}\notag\\
    \leq& dG_1\omega_d \int_{2R}^\infty dr\;\frac{r^{\gamma+2+d-1}}{e^{\alpha(r-|\textbf{v}|)^2}} <dG_1\omega_d\notag,
\end{align}
by \eqref{assum_decay} where $\omega_d$ is the volume of the unit ball in $\rone^d$. For the second integration, we have
\begin{align}\label{in_bound}
    \int_{B_{2R}(0)}d\textbf{v}_*|\textbf{v}_*|^{\gamma+2}&|(f-f^{nn})(\textbf{v}-\textbf{v}_*)|\\
    \leq& \left(\int_{B_{2R}(0)}d\textbf{v}_*|\textbf{v}_*|^{2\gamma+4}\right)^{\frac{1}{2}}\left(\int_{B_{2R}(0)}d\textbf{v}_*|(f-f^{nn})(\textbf{v}-\textbf{v}_*)|^2\right)^{\frac{1}{2}}\notag\\
    \leq& \left(\int_0^{2R} dr\;r^{2\gamma+4+d-1}\right)^{\frac{1}{2}}\notag\\
    &\bigg(\int_V d\textbf{v}_*|(f-f^{nn})(\textbf{v}_*)|^2+\int_{B_{2R}(v)\setminus V} d\textbf{v}_*|(f-f^{nn})(\textbf{v}_*)|^2\bigg)^{\frac{1}{2}}\notag\\
    \leq& M_3 \left(\|f-f^{nn}\|_{L^2_\textbf{v}(V)}^2+\epsilon^2\text{vol}(B_{2R}(v)\setminus V)\right)^{\frac{1}{2}}\notag,
\end{align}
when the index $\gamma$ satisfies $\gamma>-\frac{d}{2}-2$ ($\Leftrightarrow 2\gamma+4+d-1>-1$) for some positive constant $M_3$ by H\"older's inequality and using the assumption \eqref{assum_bdry}. Therefore, $A_1(t)$ is bounded as
\begin{align*}
  A_1(t)\leq& M_4\epsilon\left(\left(\|f-f^{nn}\|_{L^2_\textbf{v}(V)}^2+\epsilon^2\right)^{\frac{1}{2}}+\|D-D^{nn}\|_{op} + dG_1\omega_d\right) + M_2\epsilon\\
  \leq& \frac{{M_4}^2}{2}\epsilon^2+\frac{1}{2}\|f-f^{nn}\|_{L^2_\textbf{v}(V)}^2+\frac{\epsilon^2}{2}+M_4\epsilon(\varepsilon_D+dG_1\omega_d)+M_2\epsilon
\end{align*}
for some positive constant $M_4$ by Young's inequality. Also, by \eqref{D_D}, the second term $-A_2(t)$ also can be bounded as 
\begin{multline}
-A_2(t)\leq\left|\int_Vd\textbf{v}\nabla_\textbf{v}(f-f^{nn})\cdot(D(f)-D^{nn}(f^{nn}))\nabla f^{nn}\right|\\
-\int_Vd\textbf{v}\nabla_\textbf{v}(f-f^{nn})\cdot D(f)\nabla_\textbf{v}(f-f^{nn})\\
\leq M_5\sum_{1\leq i,j\leq d}\int_Vd\textbf{v}\int_{\rone^d}d\textbf{v}_*\left|\partial_{v_i}(f-f^{nn})(\textbf{v})(f-f^{nn})(\textbf{v}-\textbf{v}_*)\Phi(\textbf{v}_*)_{ij}\right|\\
+d^2M_5\|D-D^{nn}\|_{op}\|\nabla_\textbf{v}(f-f^{nn})\|_{L^1_\textbf{v}(V)}\\
-\sum_{1\leq i,j\leq d}\int_Vd\textbf{v}\partial_i(f-f^{nn})[D(f)]_{ij}\partial_j(f-f^{nn})
\end{multline}
for some positive constant $M_5$ since $\nabla f^{nn}$ is bounded on the compact domain $V$. Using the property \eqref{assum_ellipticity}, we have
\begin{equation}
    -\sum_{1\leq i,j\leq d}\int_Vd\textbf{v}\partial_i(f-f^{nn})[D(f)]_{ij}\partial_j(f-f^{nn})\leq -M_6K\|\nabla_\textbf{v}(f-f^{nn})\|_{L^2_\textbf{v}(V)}^2,
\end{equation}
for some positive constant $M_6$. Using Young's convolution inequality, we have
\begin{align}\label{young_convolution}
    \int_Vd\textbf{v}\int_{\rone^d}d\textbf{v}_*&\left|\partial_{v_i}(f-f^{nn})(\textbf{v})(f-f^{nn})(\textbf{v}-\textbf{v}_*)\Phi(\textbf{v}_*)_{ij}\right|\\
    \leq& \int_Vd\textbf{v}\int_{\rone^d}d\textbf{v}_*\left|\partial_{v_i}(f-f^{nn})(\textbf{v})(f-f^{nn})(\textbf{v}-\textbf{v}_*)|\textbf{v}_*|^{\gamma+2}\right|\notag\\
    =&\int_Vd\textbf{v}\int_{\rone^d\setminus B_{2R}(0)}d\textbf{v}_*\left|\partial_{v_i}(f-f^{nn})(\textbf{v})(f-f^{nn})(\textbf{v}-\textbf{v}_*)|\textbf{v}_*|^{\gamma+2}\right|\notag\\
    &+\int_Vd\textbf{v}\int_{B_{2R}(0)}d\textbf{v}_*\left|\partial_{v_i}(f-f^{nn})(\textbf{v})(f-f^{nn})(\textbf{v}-\textbf{v}_*)|\textbf{v}_*|^{\gamma+2}\right|\notag\\
    \leq& \epsilon\|\partial_{v_i}(f-f^{nn})\|_{L^1_\textbf{v}(V)}\notag\\
    &+ \|\partial_{v_i}(f-f^{nn})\|_{L^2_\textbf{v}(V)}\left(\|f-f^{nn}\|_{L^2_\textbf{v}(V)}+M_7\epsilon^2\right)\int_{B_{2R}(0)}d\textbf{v}|\textbf{v}|^{\gamma+2}\notag
\end{align}
for some positive constant $M_7$ in a similar way in \eqref{out_bound} and \eqref{in_bound}. When the index $\gamma$ satisfies $\gamma>-d-2$ ($\Leftrightarrow \gamma+2+d-1>-1$), the last integral term
\begin{equation}
     \int_{B_{2R}(0)}d\textbf{v}|\textbf{v}|^{\gamma+2}=\int_0^{2R} dr\;r^{\gamma+2+d-1}
 \end{equation}
is bounded. Therefore, we have
\begin{align*}
    -A_2(t)\leq& M_8\|\nabla_{\textbf{v}}(f-f^{nn})\|_{L^2_\textbf{v}(V)}\left(\|f-f^{nn}\|_{L^2_\textbf{v}(V)}+\epsilon+\|D-D^{nn}\|_{op}\right)\\
    &-M_6K\|\nabla_\textbf{v}(f-f^{nn})\|_{L^2_\textbf{v}(V)}^2\\
    \leq& 2\varepsilon_0\|\nabla_{\textbf{v}}(f-f^{nn})\|_{L^2_\textbf{v}(V)}^2+\frac{M_8^2}{4\varepsilon_0}\|f-f^{nn}\|_{L^2_\textbf{v}(V)}^2+\frac{M_8^2}{4\varepsilon_0}(\epsilon^2+\varepsilon_D^2)\\
    &-M_6K\|\nabla_\textbf{v}(f-f^{nn})\|_{L^2_\textbf{v}(V)}^2
\end{align*}
for some positive constant $M_8$ by H\"older's inequality and Young's inequality. Therefore, we can reduce \eqref{At_all} to
\begin{multline}\label{At_final}
     A(t) \leq \left(\frac{1}{2}+\frac{M_8^2}{4\varepsilon_0}\right)\|f-f^{nn}\|_{L^2_\textbf{v}(V)}^2+2\varepsilon_0\|\nabla_{\textbf{v}}(f-f^{nn})\|_{L^2_\textbf{v}(V)}^2\\
     -M_6K\|\nabla_\textbf{v}(f-f^{nn})\|_{L^2_\textbf{v}(V)}^2+C_1(\epsilon_0)\varepsilon_D+C_2(\epsilon_0)\epsilon.
\end{multline}
for some positive constants $C_1(\epsilon_0)$ and $C_2(\epsilon_0)$ which depend on $\epsilon_0$. On the other hands, we consider the $B(t)$ as
\begin{align*}
-B(t)\\
=&-\int_Vd\textbf{v}(f-f^{nn})\nabla_\textbf{v}\cdot\left[(F(f)-F^{nn}(f^{nn}))f^{nn}+F(f)(f-f^{nn})\right]\\
=&-\int_Vd\textbf{v}(f-f^{nn})\bigg[\nabla_\textbf{v}\cdot(F(f)-F^{nn}(f^{nn}))f^{nn}\\
&+(F(f)-F^{nn}(f^{nn}))\cdot\nabla_\textbf{v}f^{nn}+\nabla_\textbf{v}\cdot F(f)(f-f^{nn})+F(f)\cdot\nabla_\textbf{v}(f-f^{nn})\bigg]
\end{align*}
Since $f^{nn}$, $\nabla_\textbf{v}f^{nn}$, $F(f)$ and $\nabla_\textbf{v}\cdot F(f)$ is continuous function with respect to $v$ on the compact set $V$, we can bound $-B(t)$ as
\begin{multline}\label{Bt_all}
-B(t)\leq\\
M_9\bigg(\sum_{1\leq i\leq d}\int_Vd\textbf{v}|f-f^{nn}|\left|\partial_{v_i}(F(f)-F^{nn}(f^{nn}))_i+(F(f)-F^{nn}(f^{nn}))_i\right|\\
+ \|f-f^{nn}\|_{L^2_\textbf{v}(V)}^2 + \|f-f^{nn}\|_{L^2_\textbf{v}(V)}\|\nabla_{\textbf{v}}(f-f^{nn})\|_{L^2_\textbf{v}(V)}\bigg)
\end{multline}
for some positive constant $M_9$ by H\"older's inequality. For the first integral term on the right side of \eqref{Bt_all}, we have
\begin{multline}
    \int_Vd\textbf{v}|f-f^{nn}|\left|\partial_{v_i}(F(f)-F^{nn}(f^{nn}))_i+(F(f)-F^{nn}(f^{nn}))_i\right|\\
    \leq  \sum_{1\leq j\leq d}\int_Vd\textbf{v}\int_{\rone^d}d\textbf{v}_*\bigg|(f-f^{nn})(\textbf{v})(\partial_{v_i}\Phi_{ij}+\Phi_{ij})(\textbf{v}-\textbf{v}_*)\nabla_{v_j}(f-f^{nn})(\textbf{v}_*)\bigg|\\
    +\int_Vd\textbf{v}|f-f^{nn}|\left(\|F-F^{nn}\|_{op}+\|\nabla_{\textbf{v}}\cdot(F-F^{nn})\|_{op}\right)
\end{multline}
Therefore, by Young’s convolution inequality, we have
\begin{multline}
    -B(t)\\
    \leq M_{10}\|\nabla_{\textbf{v}}(f-f^{nn})\|_{L^2_\textbf{v}(V)}\left(\|f-f^{nn}\|_{L^2_\textbf{v}(V)}+\epsilon\right)+2M_9\|f-f^{nn}\|_{L^2_\textbf{v}(V)}^2+M_9\varepsilon_F^2
\end{multline}
for some positive constant $M_{10}$ in a similar way in \eqref{young_convolution} except that it holds when the index $\gamma$ satisfies $\gamma>-d-1$ ($\Leftrightarrow \gamma+1+d-1>-1$) due to the derivative term $\partial_{v_i}\Phi_{ij}$. Therefore, we have
\begin{multline}\label{Bt_final}
     -B(t)\\
     \leq \left(\frac{M_{10}^2}{4\varepsilon_0}+2M_9\right)\|f-f^{nn}\|_{L^2_\textbf{v}(V)}^2+2\varepsilon_0\|\nabla_{\textbf{v}}(f-f^{nn})\|_{L^2_\textbf{v}(V)}^2+C_3(\varepsilon_0)\varepsilon_F+C_4(\varepsilon_0)\epsilon.
\end{multline}
for some positive constants $C_3(\epsilon_0)$ and $C_4(\epsilon_0)$ which depend on $\epsilon_0$. Using the inequalities \eqref{At_final} and \eqref{Bt_final}, we can reduce \eqref{energy_eq} to
\begin{multline}
\frac{1}{2}\frac{d}{dt}\|f-f^{nn}\|_{L^2_\textbf{v}(V)}^2\leq \left(\frac{M_8^2+M_{10}^2}{4\varepsilon_0}+2M_9+1\right)\|f-f^{nn}\|_{L^2_\textbf{v}(V)}^2\\
+4\varepsilon_0\|\nabla_{\textbf{v}}(f-f^{nn})\|_{L^2_\textbf{v}(V)}^2-M_6K\|\nabla_\textbf{v}(f-f^{nn})\|_{L^2_\textbf{v}(V)}^2 +\frac{1}{2}\|d_{ge}(t,\textbf{v})\|_{L^2_\textbf{v}(V)}^2 \\
+ C_1(\epsilon_0)\varepsilon_D + C_3(\varepsilon_0)\varepsilon_F + (C_2(\epsilon_0)+C_4(\epsilon_0))\epsilon.
\end{multline}
By choosing a sufficiently small $\varepsilon_0$ satisfies $\varepsilon_0<\frac{M_6K}{4}$, we have
\begin{multline}\label{energy_ineq}
\frac{d}{dt}\overbrace{\|f-f^{nn}\|_{L^2_\textbf{v}(V)}}^{Y(t)\eqdef}\leq C_5\overbrace{\|f-f^{nn}\|_{L^2_\textbf{v}(V)}}^{Y(t)}+\|d_{ge}(t,\textbf{v})\|_{L^2_\textbf{v}(V)}^2\\
+C_6\varepsilon_D+C_7\varepsilon_F+C_8\epsilon.
\end{multline}
for some positive constants $C_5$, $C_6$, $C_7$ and $C_8$ which depend on $R$ and $d$ $(d=2,3)$. We rewrite the equation \eqref{energy_ineq} as
\begin{equation}
    Y'(t)-C_5Y(t)\leq  \|d_{ge}(t,\textbf{v})\|_{L^2_\textbf{v}(V)}^2+C_6\varepsilon_D+C_7\varepsilon_F+C_8\epsilon.
\end{equation}
By Gronwall's inequality, we have
\begin{multline}\label{gronwall}
    Y(t) \leq e^{C_5t}\bigg(Y(0)+\int_0^tds\;e^{-C_5s}\|d_{ge}(s,\textbf{v})\|_{L^2_\textbf{v}(V)}^2\\
    +\int_0^tds\;e^{-C_5s}\left(C_6\varepsilon_D+C_7\varepsilon_F+C_8\epsilon\right)\bigg).
\end{multline}
Note that $Y(0)=Loss_{IC}(f^{nn})$ and
\begin{equation}
    \int_0^tds\;e^{-C_5s}\|d_{ge}(s,\textbf{v})\|_{L^2_\textbf{v}(V)}^2\leq\int_0^Tds\;\|d_{ge}(s,\textbf{v})\|_{L^2_\textbf{v}(V)}^2=Loss_{GE}(f^{nn}, D^{nn}, F^{nn}).
\end{equation}
Therefore, equation \eqref{gronwall} implies that
\begin{multline}
    \|(f-f^{nn})(t,\cdot)\|^2_{L^\infty_t([0,T];L^{2}_\textbf{v}(V))}=\|Y(t)\|_{L^\infty_t([0,T])}\\
    \leq C(Loss_{IC}(f^{nn})+Loss_{GE}(f^{nn}, D^{nn}, F^{nn})+\varepsilon_D+\varepsilon_F+\epsilon),
\end{multline}
for some positive constant $C$ which depends only on $T$, $R$ and $d$ $(d=2,3)$. This completes the proof of Theorem \ref{thm2}.
\end{proof}



\bibliographystyle{amsplaindoi} 
\bibliography{bibliography}

\providecommand{\bysame}{\leavevmode\hbox to3em{\hrulefill}\thinspace}
\providecommand{\MR}{\relax\ifhmode\unskip\space\fi MR }
\providecommand{\MRhref}[2]{%
  \href{http://www.ams.org/mathscinet-getitem?mr=#1}{#2}
}
\providecommand{\href}[2]{#2}
\begin{thebibliography}{10}

\bibitem{MR3375485}
Radjesvarane Alexandre, Jie Liao, and Chunjin Lin,
  \href{https://doi.org/10.3934/krm.2015.8.617}{\emph{Some a priori estimates
  for the homogeneous {L}andau equation with soft potentials}}, Kinet. Relat.
  Models \textbf{8} (2015), no.~4, 617--650. \MR{3375485}

\bibitem{MR1055522}
A.~A. Arsen'ev and O.~E. Buryak, \emph{On a connection between the solution of
  the {B}oltzmann equation and the solution of the {L}andau-{F}okker-{P}lanck
  equation}, Mat. Sb. \textbf{181} (1990), no.~4, 435--446. \MR{1055522}

\bibitem{MR1640174}
C.~Buet and S.~Cordier,
  \href{https://doi.org/10.1006/jcph.1998.6015}{\emph{Conservative and entropy
  decaying numerical scheme for the isotropic {F}okker-{P}lanck-{L}andau
  equation}}, J. Comput. Phys. \textbf{145} (1998), no.~1, 228--245.
  \MR{1640174}

\bibitem{MR1447091}
C.~Buet, S.~Cordier, P.~Degond, and M.~Lemou,
  \href{https://doi.org/10.1006/jcph.1997.5669}{\emph{Fast algorithms for
  numerical, conservative, and entropy approximations of the
  {F}okker-{P}lanck-{L}andau equation}}, J. Comput. Phys. \textbf{133} (1997),
  no.~2, 310--322. \MR{1447091}

\bibitem{MR3778645}
Stephen Cameron, Luis Silvestre, and Stanley Snelson,
  \href{https://doi.org/10.1016/j.anihpc.2017.07.001}{\emph{Global a priori
  estimates for the inhomogeneous {L}andau equation with moderately soft
  potentials}}, Ann. Inst. H. Poincar\'{e} Anal. Non Lin\'{e}aire \textbf{35}
  (2018), no.~3, 625--642. \MR{3778645}

\bibitem{MR3407515}
Kleber Carrapatoso,
  \href{https://doi.org/10.1016/j.bulsci.2014.12.002}{\emph{Exponential
  convergence to equilibrium for the homogeneous {L}andau equation with hard
  potentials}}, Bull. Sci. Math. \textbf{139} (2015), no.~7, 777--805.
  \MR{3407515}

\bibitem{MR3365830}
\bysame, \href{https://doi.org/10.1016/j.matpur.2015.02.008}{\emph{On the rate
  of convergence to equilibrium for the homogeneous {L}andau equation with soft
  potentials}}, J. Math. Pures Appl. (9) \textbf{104} (2015), no.~2, 276--310.
  \MR{3365830}

\bibitem{MR4121055}
Jose~A. Carrillo, Jingwei Hu, Li~Wang, and Jeremy Wu,
  \href{https://doi.org/10.1016/j.jcpx.2020.100066}{\emph{A particle method for
  the homogeneous {L}andau equation}}, J. Comput. Phys. X \textbf{7} (2020),
  100066, 24. \MR{4121055}

\bibitem{carrillo2021random}
Jos{\'e}~Antonio Carrillo, Shi Jin, and Yijia Tang, \emph{Random batch particle
  methods for the homogeneous landau equation}, arXiv preprint arXiv:2110.06430
  (2021).

\bibitem{MR2745513}
Yemin Chen, \href{https://doi.org/10.1007/s10440-010-9587-1}{\emph{Smoothing
  effects for weak solutions of the spatially homogeneous
  {L}andau-{F}ermi-{D}irac equation for hard potentials}}, Acta Appl. Math.
  \textbf{113} (2011), no.~1, 101--116. \MR{2745513}

\bibitem{MR3645392}
A.~Chertock, \emph{A practical guide to deterministic particle methods},
  Handbook of numerical methods for hyperbolic problems, Handb. Numer. Anal.,
  vol.~18, Elsevier/North-Holland, Amsterdam, 2017, pp.~177--202. \MR{3645392}

\bibitem{MR1167768}
P.~Degond and B.~Lucquin-Desreux,
  \href{https://doi.org/10.1142/S0218202592000119}{\emph{The {F}okker-{P}lanck
  asymptotics of the {B}oltzmann collision operator in the {C}oulomb case}},
  Math. Models Methods Appl. Sci. \textbf{2} (1992), no.~2, 167--182.
  \MR{1167768}

\bibitem{MR1165528}
L.~Desvillettes, \href{https://doi.org/10.1080/00411459208203923}{\emph{On
  asymptotics of the {B}oltzmann equation when the collisions become grazing}},
  Transport Theory Statist. Phys. \textbf{21} (1992), no.~3, 259--276.
  \MR{1165528}

\bibitem{MR3369941}
\bysame, \href{https://doi.org/10.1016/j.jfa.2015.05.009}{\emph{Entropy
  dissipation estimates for the {L}andau equation in the {C}oulomb case and
  applications}}, J. Funct. Anal. \textbf{269} (2015), no.~5, 1359--1403.
  \MR{3369941}

\bibitem{MR1737547}
Laurent Desvillettes and C\'{e}dric Villani,
  \href{https://doi.org/10.1080/03605300008821512}{\emph{On the spatially
  homogeneous {L}andau equation for hard potentials. {I}. {E}xistence,
  uniqueness and smoothness}}, Comm. Partial Differential Equations \textbf{25}
  (2000), no.~1-2, 179--259. \MR{1737547}

\bibitem{MR1737548}
\bysame, \href{https://doi.org/10.1080/03605300008821513}{\emph{On the
  spatially homogeneous {L}andau equation for hard potentials. {II}.
  {$H$}-theorem and applications}}, Comm. Partial Differential Equations
  \textbf{25} (2000), no.~1-2, 261--298. \MR{1737548}

\bibitem{MR3202241}
G.~Dimarco and L.~Pareschi,
  \href{https://doi.org/10.1017/S0962492914000063}{\emph{Numerical methods for
  kinetic equations}}, Acta Numer. \textbf{23} (2014), 369--520. \MR{3202241}

\bibitem{filbet2020spectral}
Francis Filbet, \emph{A spectral collocation method for the {L}andau equation
  in plasma physics}, arXiv preprint arXiv:2006.15885 (2020).

\bibitem{MR1906573}
Francis Filbet and Lorenzo Pareschi,
  \href{https://doi.org/10.1006/jcph.2002.7010}{\emph{A numerical method for
  the accurate solution of the {F}okker-{P}lanck-{L}andau equation in the
  nonhomogeneous case}}, J. Comput. Phys. \textbf{179} (2002), no.~1, 1--26.
  \MR{1906573}

\bibitem{MR2718931}
Nicolas Fournier,
  \href{https://doi.org/10.1007/s00220-010-1113-9}{\emph{Uniqueness of bounded
  solutions for the homogeneous {L}andau equation with a {C}oulomb potential}},
  Comm. Math. Phys. \textbf{299} (2010), no.~3, 765--782. \MR{2718931}

\bibitem{MR2502525}
Nicolas Fournier and H\'{e}l\`ene Gu\'{e}rin,
  \href{https://doi.org/10.1016/j.jfa.2008.11.008}{\emph{Well-posedness of the
  spatially homogeneous {L}andau equation for soft potentials}}, J. Funct.
  Anal. \textbf{256} (2009), no.~8, 2542--2560. \MR{2502525}

\bibitem{MR3599518}
Maria~Pia Gualdani and Nestor Guillen,
  \href{https://doi.org/10.2140/apde.2016.9.1772}{\emph{Estimates for radial
  solutions of the homogeneous {L}andau equation with {C}oulomb potential}},
  Anal. PDE \textbf{9} (2016), no.~8, 1772--1809. \MR{3599518}

\bibitem{guo2016convolutional}
Xiaoxiao Guo, Wei Li, and Francesco Iorio, \emph{Convolutional neural networks
  for steady flow approximation}, Proceedings of the 22nd ACM SIGKDD
  international conference on knowledge discovery and data mining, 2016,
  pp.~481--490.

\bibitem{MR1946444}
Yan Guo, \href{https://doi.org/10.1007/s00220-002-0729-9}{\emph{The {L}andau
  equation in a periodic box}}, Comm. Math. Phys. \textbf{231} (2002), no.~3,
  391--434. \MR{1946444}

\bibitem{MR4076068}
Yan Guo, Hyung~Ju Hwang, Jin~Woo Jang, and Zhimeng Ouyang,
  \href{https://doi.org/10.1007/s00205-020-01496-5}{\emph{The {L}andau equation
  with the specular reflection boundary condition}}, Arch. Ration. Mech. Anal.
  \textbf{236} (2020), no.~3, 1389--1454. \MR{4076068}

\bibitem{MR3847747}
Jiequn Han, Arnulf Jentzen, and Weinan E,
  \href{https://doi.org/10.1073/pnas.1718942115}{\emph{Solving high-dimensional
  partial differential equations using deep learning}}, Proc. Natl. Acad. Sci.
  USA \textbf{115} (2018), no.~34, 8505--8510. \MR{3847747}

\bibitem{holloway2021acceleration}
Ian Holloway, Aihua Wood, and Alexander Alekseenko, \emph{Acceleration of
  {B}oltzmann {C}ollision {I}ntegral {C}alculation {U}sing {M}achine
  {L}earning}, Mathematics \textbf{9} (2021), no.~12, 1384.

\bibitem{hornik1989multilayer}
Kurt Hornik, Maxwell Stinchcombe, and Halbert White, \emph{Multilayer
  feedforward networks are universal approximators}, Neural Networks \textbf{2}
  (1989), no.~5, 359--366.

\bibitem{MR4116803}
Hyung~Ju Hwang, Jin~Woo Jang, Hyeontae Jo, and Jae~Yong Lee,
  \href{https://doi.org/10.1016/j.jcp.2020.109665}{\emph{Trend to equilibrium
  for the kinetic {F}okker-{P}lanck equation via the neural network approach}},
  J. Comput. Phys. \textbf{419} (2020), 109665, 25. \MR{4116803}

\bibitem{MR2818606}
Shi Jin and Bokai Yan, \href{https://doi.org/10.1016/j.jcp.2011.04.002}{\emph{A
  class of asymptotic-preserving schemes for the {F}okker-{P}lanck-{L}andau
  equation}}, J. Comput. Phys. \textbf{230} (2011), no.~17, 6420--6437.
  \MR{2818606}

\bibitem{karniadakis2021physics}
George~Em Karniadakis, Ioannis~G Kevrekidis, Lu~Lu, Paris Perdikaris, Sifan
  Wang, and Liu Yang, \emph{Physics-informed machine learning}, Nature Reviews
  Physics \textbf{3} (2021), no.~6, 422--440.

\bibitem{lagaris1998artificial}
Isaac~E Lagaris, Aristidis Likas, and Dimitrios~I Fotiadis, \emph{Artificial
  neural networks for solving ordinary and partial differential equations},
  IEEE Transactions on Neural Networks \textbf{9} (1998), no.~5, 987--1000.

\bibitem{lagaris2000neural}
Isaac~E Lagaris, Aristidis Likas, and Dimitris~G Papageorgiou,
  \emph{Neural-network methods for boundary value problems with irregular
  boundaries}, IEEE Transactions on Neural Networks \textbf{11} (2000), no.~5,
  1041--1049.

\bibitem{landau1958kinetic}
Lev~Davidovich Landau, \emph{The kinetic equation in the case of coulomb
  interaction.}, Tech. report, GENERAL DYNAMICS/ASTRONAUTICS SAN DIEGO CALIF,
  1958.

\bibitem{MR1078748}
Hyuk Lee and In~Seok Kang,
  \href{https://doi.org/10.1016/0021-9991(90)90007-N}{\emph{Neural algorithm
  for solving differential equations}}, J. Comput. Phys. \textbf{91} (1990),
  no.~1, 110--131. \MR{1078748}

\bibitem{MR4313375}
Jae~Yong Lee, Jin~Woo Jang, and Hyung~Ju Hwang,
  \href{https://doi.org/10.1051/m2an/2021038}{\emph{The model reduction of the
  {V}lasov-{P}oisson-{F}okker-{P}lanck system to the
  {P}oisson-{N}ernst-{P}lanck system {\it via} the deep neural network
  approach}}, ESAIM Math. Model. Numer. Anal. \textbf{55} (2021), no.~5,
  1803--1846. \MR{4313375}

\bibitem{MR1606249}
M.~Lemou, \href{https://doi.org/10.1007/s002110050327}{\emph{Multipole
  expansions for the {F}okker-{P}lanck-{L}andau operator}}, Numer. Math.
  \textbf{78} (1998), no.~4, 597--618. \MR{1606249}

\bibitem{MR4224112}
Ruo Li, Yinuo Ren, and Yanli Wang,
  \href{https://doi.org/10.1016/j.jcp.2021.110235}{\emph{Hermite spectral
  method for {F}okker-{P}lanck-{L}andau equation modeling collisional plasma}},
  J. Comput. Phys. \textbf{434} (2021), Paper No. 110235, 22. \MR{4224112}

\bibitem{MR4108209}
Ruo Li, Yanli Wang, and Yixuan Wang,
  \href{https://doi.org/10.1137/18M1230268}{\emph{Approximation to singular
  quadratic collision model in {F}okker-{P}lanck-{L}andau equation}}, SIAM J.
  Sci. Comput. \textbf{42} (2020), no.~3, B792--B815. \MR{4108209}

\bibitem{li1996simultaneous}
Xin Li, \emph{Simultaneous approximations of multivariate functions and their
  derivatives by neural networks with one hidden layer}, Neurocomputing
  \textbf{12} (1996), no.~4, 327--343.

\bibitem{li2020fourier}
Zongyi Li, Nikola Kovachki, Kamyar Azizzadenesheli, Burigede Liu, Kaushik
  Bhattacharya, Andrew Stuart, and Anima Anandkumar, \emph{Fourier neural
  operator for parametric partial differential equations}, arXiv preprint
  arXiv:2010.08895 (2020).

\bibitem{li2020neural}
\bysame, \emph{Neural operator: Graph kernel network for partial differential
  equations}, arXiv preprint arXiv:2003.03485 (2020).

\bibitem{li2021physics}
Zongyi Li, Hongkai Zheng, Nikola Kovachki, David Jin, Haoxuan Chen, Burigede
  Liu, Kamyar Azizzadenesheli, and Anima Anandkumar, \emph{Physics-informed
  neural operator for learning partial differential equations}, arXiv preprint
  arXiv:2111.03794 (2021).

\bibitem{lu2019deeponet}
Lu~Lu, Pengzhan Jin, and George~Em Karniadakis, \emph{Deeponet: Learning
  nonlinear operators for identifying differential equations based on the
  universal approximation theorem of operators}, arXiv preprint
  arXiv:1910.03193 (2019).

\bibitem{MR2282808}
A.~Malek and R.~Shekari~Beidokhti,
  \href{https://doi.org/10.1016/j.amc.2006.05.068}{\emph{Numerical solution for
  high order differential equations using a hybrid neural
  network---optimization method}}, Appl. Math. Comput. \textbf{183} (2006),
  no.~1, 260--271. \MR{2282808}

\bibitem{miller2021encoder}
Marco~Andres Miller, Randy~Michael Churchill, A~Dener, CS~Chang, T~Munson, and
  R~Hager, \emph{Encoder--decoder neural network for solving the nonlinear
  {F}okker--{P}lanck--{L}andau collision operator in {XGC}}, Journal of Plasma
  Physics \textbf{87} (2021), no.~2.

\bibitem{MR1795398}
L.~Pareschi, G.~Russo, and G.~Toscani,
  \href{https://doi.org/10.1006/jcph.2000.6612}{\emph{Fast spectral methods for
  the {F}okker-{P}lanck-{L}andau collision operator}}, J. Comput. Phys.
  \textbf{165} (2000), no.~1, 216--236. \MR{1795398}

\bibitem{MR1677589}
I.~F. Potapenko and C.~A. de~Azevedo,
  \href{https://doi.org/10.1016/S0377-0427(98)00245-3}{\emph{The completely
  conservative difference schemes for the nonlinear {L}andau-{F}okker-{P}lanck
  equation}}, vol. 103, 1999, Applied and computational topics in partial
  differential equations (Gramado, 1997), pp.~115--123. \MR{1677589}

\bibitem{MR3881695}
M.~Raissi, P.~Perdikaris, and G.~E. Karniadakis,
  \href{https://doi.org/10.1016/j.jcp.2018.10.045}{\emph{Physics-informed
  neural networks: a deep learning framework for solving forward and inverse
  problems involving nonlinear partial differential equations}}, J. Comput.
  Phys. \textbf{378} (2019), 686--707. \MR{3881695}

\bibitem{MR3211506}
Keith Rudd,
  \href{http://gateway.proquest.com/openurl?url_ver=Z39.88-2004&rft_val_fmt=info:ofi/fmt:kev:mtx:dissertation&res_dat=xri:pqm&rft_dat=xri:pqdiss:3605193}{\emph{Solving
  {P}artial {D}ifferential {E}quations {U}sing {A}rtificial {N}eural
  {N}etworks}}, ProQuest LLC, Ann Arbor, MI, 2013, Thesis (Ph.D.)--Duke
  University. \MR{3211506}

\bibitem{MR3874585}
Justin Sirignano and Konstantinos Spiliopoulos,
  \href{https://doi.org/10.1016/j.jcp.2018.08.029}{\emph{D{GM}: a deep learning
  algorithm for solving partial differential equations}}, J. Comput. Phys.
  \textbf{375} (2018), 1339--1364. \MR{3874585}

\bibitem{MR1392006}
C\'edric Villani, \emph{On the {C}auchy problem for {L}andau equation:
  sequential stability, global existence}, Adv. Differential Equations
  \textbf{1} (1996), no.~5, 793--816. \MR{1392006}

\bibitem{wang2021learning}
Sifan Wang, Hanwen Wang, and Paris Perdikaris, \emph{Learning the solution
  operator of parametric partial differential equations with physics-informed
  deeponets}, Science advances \textbf{7} (2021), no.~40, eabi8605.

\bibitem{MR3654740}
Stephen Wollman,
  \href{https://doi.org/10.1016/j.cam.2017.04.016}{\emph{Numerical
  approximation of the spatially homogeneous {F}okker-{P}lanck-{L}andau
  equation}}, J. Comput. Appl. Math. \textbf{324} (2017), 173--203.
  \MR{3654740}

\bibitem{MR3158719}
Kung-Chien Wu, \href{https://doi.org/10.1016/j.jfa.2013.11.005}{\emph{Global in
  time estimates for the spatially homogeneous {L}andau equation with soft
  potentials}}, J. Funct. Anal. \textbf{266} (2014), no.~5, 3134--3155.
  \MR{3158719}

\bibitem{xiao2021using}
Tianbai Xiao and Martin Frank, \emph{Using neural networks to accelerate the
  solution of the {B}oltzmann equation}, Journal of Computational Physics
  \textbf{443} (2021), 110521.

\bibitem{MR3635847}
Chenglong Zhang and Irene~M. Gamba,
  \href{https://doi.org/10.1016/j.jcp.2017.03.046}{\emph{A conservative scheme
  for {V}lasov {P}oisson {L}andau modeling collisional plasmas}}, J. Comput.
  Phys. \textbf{340} (2017), 470--497. \MR{3635847}

\bibitem{MR3800689}
Yinhao Zhu and Nicholas Zabaras,
  \href{https://doi.org/10.1016/j.jcp.2018.04.018}{\emph{Bayesian deep
  convolutional encoder-decoder networks for surrogate modeling and uncertainty
  quantification}}, J. Comput. Phys. \textbf{366} (2018), 415--447.
  \MR{3800689}

\bibitem{MR3957452}
Yinhao Zhu, Nicholas Zabaras, Phaedon-Stelios Koutsourelakis, and Paris
  Perdikaris,
  \href{https://doi.org/10.1016/j.jcp.2019.05.024}{\emph{Physics-constrained
  deep learning for high-dimensional surrogate modeling and uncertainty
  quantification without labeled data}}, J. Comput. Phys. \textbf{394} (2019),
  56--81. \MR{3957452}

\end{thebibliography}





\end{document}